\def\NZQ{\mathbb}               
\def\ZZ{{\NZQ Z}}
\def\RR{{\NZQ R}}
\def\frk{\mathfrak}               
\def\Phi{{\frk N}}
\def\ab{{\mathbf a}}
\def\cb{{\mathbf c}}
\def\eb{{\mathbf e}}
\def\tb{{\mathbf t}}
\def\wb{{\mathbf w}}
\def\xb{{\mathbf x}}
\def\yb{{\mathbf y}}
\def\opn#1#2{\def#1{\operatorname{#2}}} 
\opn\gr{gr}
\def\Hc{{\mathcal H}}
\def\Gc{{\mathcal G}}
\def\Pc{{\mathcal P}}
\def\Qc{{\mathcal Q}}
\newtheorem{Theorem}{Theorem}[section]
\newtheorem{Lemma}[Theorem]{Lemma}
\newtheorem{Corollary}[Theorem]{Corollary}
\newtheorem{Proposition}[Theorem]{Proposition}
\theoremstyle{definition}
\newtheorem{Remark}[Theorem]{Remark}
\newtheorem{Example}[Theorem]{Example}
\newtheorem*{acknowledgement}{Acknowledgment}
\let\epsilon\varepsilon
\let\phi=\varphi
\let\kappa=\varkappa
\opn\dis{dis}
\opn\height{height}
\opn\dist{dist}
\def\pnt{{\raise0.5mm\hbox{\large\bf.}}}
\opn\Lex{Lex}
\opn\conv{conv}
\begin{document}

\title{Nef-partitions arising from unimodular configurations}
\author{Hidefumi Ohsugi and Akiyoshi Tsuchiya}
\address{Hidefumi Ohsugi,
	Department of Mathematical Sciences,
	School of Science and Technology,
	Kwansei Gakuin University,
	Sanda, Hyogo 669-1337, Japan} 
\email{ohsugi@kwansei.ac.jp}

\address{Akiyoshi Tsuchiya,
Graduate school of Mathematical Sciences,
University of Tokyo,
Komaba, Meguro-ku, Tokyo 153-8914, Japan} 
\email{akiyoshi@ms.u-tokyo.ac.jp}

\subjclass[2010]{05A15, 05C31, 13P10, 52B12, 52B20}
\keywords{reflexive polytope, Gorenstein polytope, nef-partition, integer decomposition property, Gr\"{o}bner basis}

\begin{abstract}
Reflexive polytopes have been studied from viewpoints of combinatorics, commutative algebra and algebraic geometry.
A nef-partition of a reflexive polytope $\mathcal{P}$ is a decomposition $\mathcal{P}=\mathcal{P}_1+\cdots+\mathcal{P}_r$ such that each $\mathcal{P}_i$ is a lattice polytope containing the origin.
Batyrev and van Straten gave a combinatorial method for explicit constructions of mirror pairs of Calabi-Yau complete intersections obtained from nef-partitions.
In the present paper, by means of Gr\"{o}bner basis techniques, we give a large family of nef-partitions arising from unimodular configurations.
\end{abstract}

\maketitle

\section*{Introduction}
A \textit{lattice polytope} is a convex polytope all of whose vertices have integer coordinates.
A lattice polytope $\Pc \subset \RR^d$ of dimension $d$ is 
called {\em reflexive} 
if the origin ${\bf 0}$ of $\RR^d$ belongs to the interior of $\Pc$ and 
if the dual polytope 
\[
\Pc^{\vee} = \{ {\bf x} \in \RR^{d} \, : \, \langle {\bf x}, {\bf y} \rangle \le 1,
\, \forall {\bf y} \in \Pc \}
\]
is again a lattice polytope.  Here $\langle {\bf x}, {\bf y} \rangle$
is the canonical inner product of $\RR^d$.
It is known that reflexive polytopes correspond to Gorenstein toric Fano varieties, and they are related to
mirror symmetry (see, e.g., \cite{mirror,Cox}).
Combinatorial aspects of mirror symmetry motivate the following definition:
a \textit{nef-partition of length $r$} is a decomposition $\Pc=\Pc_1+\cdots+\Pc_r$ of a $d$-dimensional reflexive polytope $\Pc \subset \RR^d$ into a Minkowski sum of $r$ lattice polytopes $\Pc_1,\ldots,\Pc_r$ such that each ${\bf 0} \in \Pc_i$.
Nef-partitions give many explicit constructions of mirrors of Calabi-Yau complete intersections (\cite{BN08,BS95,Bo93}).

In the present paper, 
making use of algebraic techniques involving Gr\"{o}bner bases, we give a large family of nef-partitions arising from unimodular configurations.
Given positive integers $d$ and $n$, let $\ZZ^{d \times n}$
denote the set of all $d \times n$ integer matrices.
A \textit{configuration} of $\RR^d$ is a matrix $A \in \ZZ^{d \times n}$, for which there exists an affine hyperplane $\Hc \subset \RR^d$ not passing ${\bf 0}$ such that each column vector of $A$ lies $\Hc$.
An integer matrix $A \in \ZZ^{d \times n}$ of rank $d$ is called \textit{unimodular} if all nonzero maximal minors of $A$ have the same absolute value. 
Unimodular matrices are important in polyhedral combinatorics and combinatorial optimization \cite{integer}. 
Given an integer matrix $A=(\ab_1,\ldots,\ab_n) \in \ZZ^{d \times n}$, let $\Pc_{A}:=\conv(\{\ab_1,\ldots, \ab_n\})$.
A famous example of a unimodular matrix is the \textit{incidence matrix} $A_G$ of a bipartite graph $G$ (by deleting redundant rows) and $\Pc_{A_G}$ is called the \textit{edge polytope} of $G$ (see Section~\ref{epsection}).
 
In order to give a family of nef-partitions, we consider whether the Cayley sum of given lattice polytopes is Gorenstein. A lattice polytope $\Pc \subset \RR^d$ is called {\em Gorenstein} of index $r$ if $r\Pc$ is unimodularly equivalent to a reflexive polytope.
In particular, a reflexive polytope is Gorenstein of index $1$.
Gorenstein polytopes are of interest in combinatorial commutative algebra, mirror symmetry, and tropical geometry (we refer to \cite{BJ,BN08,Joswig}). 
Given $r$ lattice polytopes $\Pc_1,\ldots, \Pc_r \subset \RR^d$, the \textit{Cayley sum} of $\Pc_1,\ldots, \Pc_r$ is the lattice polytope
\[
\Pc_1*\cdots*\Pc_r:=\conv(\{ \{\eb_1\} \times \Pc_1, \ldots, \{\eb_{r-1}\} \times \Pc_{r-1}, \{{\bf 0}\} \times \Pc_r\}) \subset \RR^{r-1} \times \RR^{d},
\]
where $\eb_1,\ldots,\eb_{r-1}$ are the standard basis of $\RR^{r-1}$.
Then it is known that $\Pc_1+\cdots+\Pc_r$ is Gorenstein of index $1$ if (and only if) $\Pc_1*\cdots*\Pc_r$ is Gorenstein of index $r$ (\cite[Theorem 2.6]{BN08}). 
On the other hand, our interest is in whether $\Pc_1+\cdots+\Pc_r$ possesses a regular unimodular triangulation. 
A \textit{unimodular simplex} is a lattice simplex which is unimodularly equivalent to the standard simplex $\Delta^d \subset \RR^d$, the convex hull of ${\bf 0}$ together with $\eb_1,\ldots,\eb_d$.
Equivalently, a full-dimensional lattice simplex in $\RR^d$ is unimodular if and only if it has the minimal possible Euclidean volume, $1/d!$.
A triangulation of a lattice polytope is called \textit{unimodular} if every maximal simplex is unimodular.
A full-dimensional lattice polytope $\Pc \subset \RR^d$  is called \textit{spanning} if it holds $\sum_{\ab \in \Pc \cap \ZZ^d}\ZZ(\ab,1)=\ZZ^{d+1}$.
In general, we say that a lattice polytope is spanning if it is unimodularly equivalent to a full-dimensional spanning polytope.
Sturmfels gave a one-to-one correspondence between a regular triangulation of a lattice polytope and the radical of an initial ideal of its toric ideal (\cite[Section 8]{sturmfels1996}).
In particular, for a spanning polytope, its regular triangulation is unimodular if and only if the associated initial ideal is squarefree.
Moreover, it follows that if $\Pc_1*\cdots *\Pc_r$ possesses a regular unimodular triangulation for spanning polytopes $\Pc_1,\ldots, \Pc_r \subset \RR^d$ of dimension $d$, then so does $\Pc_1+\cdots+\Pc_r$ (\cite[Theorem 2.2]{HOTCayley}). In particular, from \cite[Theorem 0.4]{TCayley} one has
\begin{equation}
\label{eq:oda}
(\Pc_1 \cap \ZZ^d)+\cdots + (\Pc_r \cap \ZZ^d)=(\Pc_1+\cdots+\Pc_r) \cap \ZZ^d.	
\end{equation}
In \cite{Oda}, Oda asked when the equation (\ref{eq:oda}) holds.
Recently, this question and the following conjecture
are among the current trends of the research
on lattice polytopes.

\bigskip

\noindent
{\bf Oda Conjecture.}
Every smooth polytope has the integer decomposition property.

\bigskip

For a lattice polytope $\Pc$, we let $-\Pc:=\{-\ab : \ab \in \Pc \}$. The main results of the present paper are the following two theorems.
\begin{Theorem}
\label{main1}
	Let $A=(\ab_1,\ldots,\ab_n) \in \ZZ^{d \times n}$ be a unimodular configuration.
	Assume that $\Pc_A \cap \ZZ^d = \{\ab_1,\ldots,\ab_n \}$ and $\Pc_A$ is spanning. 
	Then
	\begin{enumerate}
		\item[{\rm (1)}] $\Pc_A*(-\Pc_A)$ is Gorenstein of index $2$ with a regular unimodular triangulation{\rm ;}
		\item[{\rm (2)}] $\Pc_A+(-\Pc_A)$ is reflexive with a regular unimodular triangulation. In particular, $(\Pc_A-\ab)+(-\Pc_A+\ab)$ is a nef-partition, where $\ab$ is an arbitrary lattice point in $\Pc_A${\rm ;}
		\item[{\rm (3)}] One has $(\Pc_A \cap \ZZ^d)+(-\Pc_A \cap \ZZ^d)=(\Pc_A+(-\Pc_A)) \cap \ZZ^d$.
	\end{enumerate}
\end{Theorem}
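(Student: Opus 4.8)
The plan is to reduce everything to the combinatorics of the Cayley configuration and to a single Gr\"obner basis computation. First I would list the lattice points of $\Pc_A*(-\Pc_A)$ explicitly: since the first coordinate of any point of the Cayley sum lies in $[0,1]$, the only lattice points sit at heights $0$ and $1$, and using $\Pc_A\cap\ZZ^d=\{\ab_1,\dots,\ab_n\}$ these are
\[
\bb_i=(1,\ab_i)\ (1\le i\le n),\qquad \bb_{n+j}=(0,-\ab_j)\ (1\le j\le n).
\]
Set $B=(\bb_1,\dots,\bb_{2n})\in\ZZ^{(d+1)\times 2n}$ and let $I_B\subset K[x_1,\dots,x_n,y_1,\dots,y_n]$ be its toric ideal, with $x_i\leftrightarrow\bb_i$ and $y_j\leftrightarrow\bb_{n+j}$. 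The decisive structural fact is
\[
\bb_i+\bb_{n+i}=(1,\mathbf 0)=:\omega\qquad(1\le i\le n),
\]
so that $x_iy_i-x_jy_j\in I_B$ for all $i,j$. Applying the configuration functional to the defining equations shows every relation is bihomogeneous, whence $I_B$ is generated by a Gr\"obner basis $G_x$ of $I_A$ in the $x$-variables, the same relations $G_y$ in the $y$-variables, and the linking binomials $x_iy_i-x_jy_j$; that these generate $I_B$ over $\ZZ$ (not merely over $\QQ$) is where the spanning hypothesis on $\Pc_A$ first enters.

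For the regular unimodular triangulation in (1), note that under our hypotheses the common absolute value of the maximal minors of $A$ is $1$ (otherwise some edge of $\Pc_A$ would carry a non-vertex lattice point, contradicting $\Pc_A\cap\ZZ^d=\{\ab_i\}$), so every initial ideal of $I_A$ is squarefree. Fix a term order $<_0$ realizing this on the $x$-variables and its copy on the $y$-variables, and combine them into a block order with $x\gg y$, so that the initial term of each link $x_iy_i-x_jy_j$ is the squarefree monomial $x_iy_i$. It then remains to run Buchberger's criterion on $G_x\cup G_y\cup\{x_iy_i-x_jy_j\}$. The $S$-polynomials among the links telescope back into links, while the $S$-polynomial of a link with an element $x^\alpha-x^\beta$ of $G_x$ (when $x_i\mid x^\alpha$) equals $(x^\alpha/x_i)\,x_jy_j-x^\beta y_i$, which I expect to reduce to zero by first transporting all $y$'s to a common index via the links and then applying $G_x$ and $G_y$. \emph{This reduction is the main obstacle:} one must verify that no new, non-squarefree leading terms are produced, so that $\mathrm{in}_<(I_B)$ is squarefree. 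Granting this, $\Pc_A*(-\Pc_A)$ is spanning (a consequence of $\Pc_A$ being spanning, to be checked beforehand), and by the Sturmfels correspondence recalled in the introduction the squarefree initial ideal yields a regular unimodular triangulation; in particular $K[B]$ is normal and Cohen--Macaulay.

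To finish (1) I would identify the canonical module of $K[B]$ through the interior lattice points of $\mathrm{cone}(B)$. Since $\omega=(1,\mathbf 0)=\frac1n\sum_{i}(\bb_i+\bb_{n+i})$ is a positive combination of all generators, it lies in the interior of $\mathrm{cone}(B)$, and its degree is $2$. The remaining point is to prove that $\omega$ is the \emph{unique minimal} interior lattice point, i.e. $\mathrm{int}\,\mathrm{cone}(B)\cap\ZZ^{d+1}=\omega+\bigl(\mathrm{cone}(B)\cap\ZZ^{d+1}\bigr)$; here $\Pc_A\cap\ZZ^d=\{\ab_i\}$, spanning, and unimodularity are used to control the lattice points of the cone, and this is the second genuinely technical step. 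It makes $K[B]$ Gorenstein with $a$-invariant $-2$, that is, $\Pc_A*(-\Pc_A)$ is Gorenstein of index $2$, proving (1). Parts (2) and (3) are then formal. By \cite[Theorem 2.2]{HOTCayley} the regular unimodular triangulation descends from $\Pc_A*(-\Pc_A)$ to $\Pc_A+(-\Pc_A)$, and by \cite[Theorem 2.6]{BN08} index-$2$ Gorensteinness of the Cayley sum is equivalent to reflexivity of $\Pc_A+(-\Pc_A)$; for any lattice point $\ab\in\Pc_A$ the translates $\Pc_A-\ab$ and $-\Pc_A+\ab$ are lattice polytopes each containing $\mathbf 0$ whose Minkowski sum is this reflexive polytope, yielding the asserted nef-partition. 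Finally, (3) is exactly equation~(\ref{eq:oda}) for $r=2$, which holds by \cite[Theorem 0.4]{TCayley} once the regular unimodular triangulation of the Cayley sum is in hand.
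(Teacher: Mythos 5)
Your reduction collapses at the very first structural claim: bihomogeneity does \emph{not} imply that $I_B=I_{\Pc_A*(-\Pc_A)}$ is generated by $G_x\cup G_y\cup\{x_iy_i-x_jy_j\}$, and in fact this is false already for the simplest examples covered by the theorem. Take $A$ to be the homogenized unit square, with columns $\ab_1=(0,0,1)$, $\ab_2=(1,0,1)$, $\ab_3=(0,1,1)$, $\ab_4=(1,1,1)$; this is a unimodular configuration, $\Pc_A\cap\ZZ^3=\{\ab_1,\dots,\ab_4\}$, and $\Pc_A$ is spanning. Here $I_A=\langle x_1x_4-x_2x_3\rangle$, yet the mixed binomial $x_2y_1-x_4y_3$ lies in $I_B$ (because $\ab_2-\ab_1=\ab_4-\ab_3$) while it does not lie in the ideal $J$ generated by your proposed set: all of your generators have degree two, so any degree-two element of $J$ is a scalar combination of them, and none of their monomials is $x_2y_1$ or $x_4y_3$. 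Hence $J\subsetneq I_B$. Since Buchberger's algorithm never leaves the ideal generated by its input, your plan can at best certify a Gr\"obner basis of $J$, never of $I_B$, so it cannot establish that $\mathrm{in}(I_B)$ is squarefree; and indeed your expected reduction pattern fails concretely: with leading terms $x_2x_3$ and $x_3y_3$, the $S$-polynomial of $x_2x_3-x_1x_4$ and the link $x_3y_3-x_1y_1$ is $x_1(x_2y_1-x_4y_3)$, which is a nonzero normal form with respect to your set. The mixed binomials with distinct $x$- and $y$-indices are unavoidable; they are exactly the elements $g_1,\dots,g_s$ in the paper's Theorems~\ref{pmGB} and \ref{CayleyGB}, and controlling them is the real content of the Gr\"obner-basis step.

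The two places you flag as ``genuinely technical steps'' (the Buchberger verification and the identification of $(1,\mathbf{0})$ as the unique minimal interior lattice point of the cone) are precisely the substance of parts (1), and neither is carried out --- the first, as explained, cannot be carried out on the generating set you chose. For comparison, the paper sidesteps both: it starts from the centrally symmetric configuration $A^{\pm}$, for which squarefreeness of an initial ideal and Gorensteinness of $K[A^{\pm}]$ are already known from \cite{OHcentrally}; Theorem~\ref{pmGB} then pins down the shape of the reduced Gr\"obner basis of $I_{A^{\pm}}$ as $\{x_iy_i-z^2\}\cup\{g_1,\dots,g_s\}$, and Theorem~\ref{CayleyGB} shows the same $g_i$ together with the links give the reduced Gr\"obner basis of $I_{\Pc_A*(-\Pc_A)}$. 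Gorensteinness of index $2$ is then obtained not from a canonical-module computation but from the initial-ideal comparison $h(K[A^{\pm}],t)=(1+t)\,h^*(\Pc_A*(-\Pc_A),t)$ (using that the Cayley polytope is IDP) together with palindromicity of $h(K[A^{\pm}],t)$ and the characterization of Gorenstein polytopes by palindromic $h^*$-polynomials. Your treatment of parts (2) and (3) via \cite{HOTCayley}, \cite{BN08} and \cite{TCayley} agrees with the paper, but those steps only become available after part (1) is actually proved.
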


\begin{Theorem}
\label{main2}
	Let $A=(\ab_1,\ldots,\ab_n) \in \ZZ^{d \times n}$ be a unimodular configuration and set $A_0=(A, {\bf 0}) \in \ZZ^{d \times (n+1)}$.
	Assume that $\Pc_{A_0} \cap \ZZ^d=\{\ab_1,\ldots,\ab_n,{\bf 0}\}$ and $\Pc_{A_0}$ is spanning. 
	Then
	\begin{enumerate}
		\item[{\rm (1)}] $\Pc_{A_0}*(-\Pc_{A_0})$ is Gorenstein of index $2$ with a regular unimodular triangulation{\rm ;}
		\item[{\rm (2)}] $\Pc_{A_0}+(-\Pc_{A_0})$ is reflexive with a regular unimodular triangulation. In particular, $\Pc_{A_0}+(-\Pc_{A_0})$ is a nef-partition{\rm ;}
		\item[{\rm (3)}] One has $(\Pc_{A_0} \cap \ZZ^d)+(-\Pc_{A_0} \cap \ZZ^d)=(\Pc_{A_0}+(-\Pc_{A_0})) \cap \ZZ^d$.
	\end{enumerate}
\end{Theorem}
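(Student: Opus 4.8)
The plan is to deduce Theorem~\ref{main2} from Theorem~\ref{main1} by replacing the matrix $A_0=(A,{\bf 0})$, which is \emph{not} a configuration, by a genuine unimodular configuration living one dimension up. Set
\[
\hat A=\begin{pmatrix} A & {\bf 0} \\ {\bf 0} & 1\end{pmatrix}\in\ZZ^{(d+1)\times(n+1)},
\]
so that the columns of $\hat A$ are $(\ab_i,0)$ for $i=1,\dots,n$ together with $({\bf 0},1)$. First I would check that $\hat A$ is a unimodular configuration of rank $d+1$: since $A$ is a configuration there is $\cb$ with $\langle\cb,\ab_i\rangle=1$ for all $i$, and then $(\cb,1)$ cuts out an affine hyperplane off the origin through every column of $\hat A$; expanding an arbitrary maximal minor of $\hat A$ along its last row shows that it is either $0$ (when the column $({\bf 0},1)$ is absent) or a maximal minor of $A$, so unimodularity is inherited.

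The linchpin is a single integrality observation. Because $\Pc_{A_0}$ is spanning and $\Pc_{A_0}\cap\ZZ^d=\{\ab_1,\dots,\ab_n,{\bf 0}\}$, the vectors $\ab_1,\dots,\ab_n$ generate $\ZZ^d$, hence contain a $\ZZ$-basis of $\ZZ^d$; on such a basis the relations $\langle\cb,\ab_i\rangle=1\in\ZZ$ force $\cb\in\ZZ^d$. I would use this to build the relevant unimodular equivalences. Let $\pi\colon\RR^{d+1}\to\RR^d$ be the projection forgetting the last coordinate. On the affine hyperplane $\{(\yb,z):\langle\cb,\yb\rangle+z=1\}$ one has $z=1-\langle\cb,\yb\rangle$, so integrality of $\cb$ makes $\pi$ restrict to a bijection of the ambient affine lattices and hence to a unimodular equivalence $\Pc_{\hat A}\xrightarrow{\sim}\Pc_{A_0}$ taking $(\ab_i,0)\mapsto\ab_i$ and $({\bf 0},1)\mapsto{\bf 0}$. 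Transporting the hypotheses of Theorem~\ref{main2} through this equivalence shows $\Pc_{\hat A}\cap\ZZ^{d+1}=\{(\ab_1,0),\dots,(\ab_n,0),({\bf 0},1)\}$ and that $\Pc_{\hat A}$ is spanning, so Theorem~\ref{main1} applies to $\hat A$.

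Next I would record that the \emph{same} projection, extended by the identity on the Cayley coordinate, identifies the constructions built from $\pm\Pc_{\hat A}$ with those built from $\pm\Pc_{A_0}$. Indeed $\pi$ is linear, so it commutes with Minkowski sums and with negation, and on the affine hull of $\Pc_{\hat A}*(-\Pc_{\hat A})$, where $z=2h-1-\langle\cb,\yb\rangle$ in terms of the Cayley height $h$, integrality of $\cb$ again makes the projection a lattice bijection. This yields unimodular equivalences $\Pc_{\hat A}*(-\Pc_{\hat A})\cong\Pc_{A_0}*(-\Pc_{A_0})$ and $\Pc_{\hat A}+(-\Pc_{\hat A})\cong\Pc_{A_0}+(-\Pc_{A_0})$, the latter sending the origin to the origin. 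Since being Gorenstein of a given index, admitting a regular unimodular triangulation, being reflexive, and the integer decomposition property in~(\ref{eq:oda}) are all invariants of unimodular equivalence, parts (1)--(3) of Theorem~\ref{main2} follow at once from the corresponding parts of Theorem~\ref{main1} applied to $\hat A$. For part (2) the nef-partition claim needs no shift: reflexivity of $\Pc_{A_0}+(-\Pc_{A_0})$ is transported from Theorem~\ref{main1}, and both summands $\Pc_{A_0}$ and $-\Pc_{A_0}$ contain the origin because ${\bf 0}$ is a column of $A_0$.

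The main obstacle is to make these three identifications genuinely unimodular rather than mere affine bijections of the polytopes: everything hinges on $\cb\in\ZZ^d$, which is precisely where the spanning hypothesis together with unimodularity of $A$ (not merely its being a configuration) enters, and one must verify that $\pi$ restricts to an isomorphism of the ambient affine lattices on each of the three affine hulls so that \emph{lattice points}, and not only vertices, correspond. I would carry out these lattice computations carefully; once they are in place, the transfer of every conclusion, including the coincidence $(\Pc_{A_0}\cap\ZZ^d)+(-\Pc_{A_0}\cap\ZZ^d)=(\Pc_{A_0}+(-\Pc_{A_0}))\cap\ZZ^d$, is immediate from Theorem~\ref{main1}.
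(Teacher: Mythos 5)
Your proposal is correct, but it takes a genuinely different route from the paper. The paper proves Theorem~\ref{main2} directly: it constructs a reverse lexicographic Gr\"obner basis of $I_{\Pc_{A_0}*(-\Pc_{A_0})}$ (Theorem~\ref{AzeroGB}), applies Lemma~\ref{initialcomplex} to obtain the regular unimodular triangulations, the IDP, and part~(3), and then gets Gorensteinness of index $2$ by observing that ${\rm in}_<(I_{A^\pm})$ and ${\rm in}_{<'}(I_{\Pc_{A_0}*(-\Pc_{A_0})})$ have the same minimal monomial generators, so that $h^*(\Pc_{A_0}*(-\Pc_{A_0}),t)=h(K[A^\pm],t)$, which is palindromic of degree $d$ by Proposition~\ref{OHresults}; reflexivity of the Minkowski sum then follows from \cite[Theorem 2.6]{BN08}. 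You instead reduce Theorem~\ref{main2} to Theorem~\ref{main1} by lifting $A_0$ to the unimodular configuration $\hat A\in\ZZ^{(d+1)\times(n+1)}$ and checking that the coordinate projection restricts to unimodular equivalences on the three relevant affine hulls. This is legitimate and non-circular, since the paper's proof of Theorem~\ref{main1} does not use Theorem~\ref{main2}: both are proved in parallel from the Gr\"obner bases of Section~\ref{gbsection}, and your argument only consumes the inputs to Theorem~\ref{main1}. Your route is more geometric and renders Theorem~\ref{AzeroGB} unnecessary for Theorem~\ref{main2}; what it does not recover is the paper's Corollary $h^*(\Pc_{A_0}*(-\Pc_{A_0}),t)=(1+t)\,h^*(\Pc_A*(-\Pc_A),t)$, which the paper obtains for free from its $h$-polynomial bookkeeping.

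One small repair is needed. Your inference that $\ab_1,\dots,\ab_n$ ``generate $\ZZ^d$, hence contain a $\ZZ$-basis'' is not valid in general: for instance $(1,0),(1,2),(1,5)$ generate $\ZZ^2$ and lie on a common affine hyperplane off the origin, yet no two of them form a basis. (The statement does happen to hold in your setting, because generation forces the gcd of the maximal minors of $A$ to be $1$, and unimodularity then forces all nonzero maximal minors to be $\pm 1$.) But you do not need it: since each $\eb_j$ is an integer combination $\sum_i n_i\ab_i$, pairing with $\cb$ gives $c_j=\sum_i n_i\langle\cb,\ab_i\rangle=\sum_i n_i\in\ZZ$ directly. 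With that phrasing fixed, the integrality of $\cb$ --- and hence all three lattice identifications, which is where the real content of your reduction lies --- is sound, and the transfer of parts (1)--(3) from Theorem~\ref{main1} goes through as you describe.
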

\begin{Remark}
	In general, for a lattice polytope $\Pc \subset \RR^d$, if $\Pc+(-\Pc)$ is reflexive, then $(\Pc-\ab)+(-\Pc+\ab)$ is a nef-partition, where $\ab$ is an arbitrary lattice point in $\Pc$.
\end{Remark}

In Section \ref{epsection}, we will apply these theorems to unimodular configurations arising from finite simple graphs. In fact, we will show
that for any finite simple graph $G$ all pairs of whose odd cycles have a common vertex, $(\Pc_{A_G}-\ab)+(-\Pc_{A_G}+\ab)$ is a nef-partition, where $\ab$ is an arbitrary lattice point in $\Pc_{A_G}$ (Theorem~\ref{edge1}). Note that for such a graph, the edge polytope is unimodular, i.e., all triangulations are unimodular.
Moreover, we will show that for any finite bipartite graphs $G$, $\Pc_{(A_G)_0}+(-\Pc_{(A_G)_0})$ is a nef-partition (Theorem~\ref{edge2}). 

The present paper is organized as follows:
In Section~\ref{gbsection}, we will investigate three types of toric ideals arising from unimodular configurations. In particular, they possess squarefree initial ideals with respect to some reverse lexicographic orders (Theorems \ref{pmGB}, \ref{CayleyGB} and \ref{AzeroGB}).
In Section~\ref{proofsection}, 
we compare $h^*$-polynomials with $h$-polynomials for the
ideals and initial ideals considered in Theorems  \ref{pmGB}, \ref{CayleyGB} and \ref{AzeroGB},
thus proving Theorems \ref{main1} and \ref{main2}.
Finally, in Section \ref{epsection}, we apply these theorems to unimodular configurations arising from finite simple graphs.

\section{Reverse lexicographic Gr\"{o}bner bases of unimodular configurations}

\label{gbsection}

Let  $A =(\ab_1,\ldots,\ab_n)  \in \ZZ^{d \times n}$ be a configuration.
From now on, we always assume that $A$ has no repeated columns.
Let $K[\tb^{\pm}]=K[t_1^{\pm 1},\dots,t_d^{\pm 1}]$ be a
Laurent polynomial ring over a field $K$.
Given an integer vector $\alpha = (\alpha_1,\dots,\alpha_d) \in \ZZ^d$,
let $\tb^\alpha = t_1^{\alpha_1} \dots t_d^{\alpha_d}$
be a Laurent monomial in $K[\tb^\pm]$.
The {\em toric ring} $K[A] \subset K[\tb^\pm]$ of $A$ is a semigroup ring generated by 
$\tb^{\ab_1},\dots,\tb^{\ab_n}$ over $K$.
Let $K[\xb] = K[x_1,\dots,x_n]$ be a polynomial ring over $K$ with each ${\rm deg}(x_i)=1$.
Then the {\em toric ideal} $I_A$ of $A$ is the kernel of 
a surjective ring homomorphism 
$
\varphi_{A} : K[\xb] \rightarrow K[A]
$
defined by 
$\varphi_{A} (x_i) = \tb^{\ab_i}$ for each $1 \le i \le n$.
It is known that $I_A$ is generated by homogeneous binomials (of degree $\ge 2$)
if $I_A \neq \{0\}$.
In addition, any reduced  Gr\"{o}bner basis of $I_A$ consists of 
homogeneous binomials.
See \cite[Section~3]{BinomialIdeals} for the introduction to toric rings and ideals.
For a lattice polytope $\Pc \subset \RR^d$ with $\Pc \cap \ZZ^d =\{ \ab_1,\ldots, \ab_n\}$, we define a configuration 
\[A_{\Pc} :=
\left(\begin{array}{ccc}
\ab_1 & \cdots  & \ab_n\\
1  &  \cdots & 1
\end{array}
\right)
\in \ZZ^{(d+1) \times n}.\]
Then the toric ring $K[\Pc]$ of $\Pc$ is $K[A_{\Pc}]$ and the toric ideal $I_{\Pc}$ of $\Pc$ is $I_{A_{\Pc}}$.

Given an integer matrix $A =(\ab_1,\ldots,\ab_n)  \in \ZZ^{d \times n}$,
 let $A^\pm$ be a configuration 
\[A^\pm :=
\left(\begin{array}{ccccccc}
\ab_1 & \cdots & \ab_n & -\ab_1 & \cdots & -\ab_n & {\bf 0}\\
1 & \cdots & 1& 1 & \cdots & 1 & 1
\end{array}
\right)
\in \ZZ^{(d+1) \times (2n+1)}.\]
The configuration $A^\pm$ is called the {\em centrally symmetric configuration} of $A$.
The toric ideal $I_{A^\pm}$ of a configuration $A^\pm$ is the kernel of 
a ring homomorphism 
\[
\varphi_{A^\pm} : K[x_1,\dots,x_n,y_1,\dots,y_n,z] \rightarrow K[t_1^{\pm 1},\dots, t_d^{\pm 1}, s]
\]
defined by 
$\varphi_{A^\pm} (x_i) = \tb^{\ab_i} s$,
$\varphi_{A^\pm} (y_i) = \tb^{-\ab_i} s$
for each $1 \le i \le n$,
and
$\varphi_{A^\pm} (z) =s$.
The following proposition is given in \cite[Theorems~2.7, 2.15 and Corollary 2.8]{OHcentrally}.

\begin{Proposition}
\label{OHresults}
	Let $A \in \ZZ^{d \times n}$ be a unimodular matrix.
Then we have the following{\rm :}
\begin{itemize}
\item[{\rm (a)}] 
The initial ideal of $I_{A^\pm}$ is squarefree
with respect to a reverse lexicographic order such that
the smallest variable is $z${\rm ;}
\item[{\rm (b)}]
The toric ring $K[A^\pm]$ is normal and Gorenstein.
In particular the $h$-polynomial $h(K[A^\pm],t)$ of $K[A^\pm]$ is 
a palindromic polynomial of degree $d$, i.e.,
$h(K[A^\pm],t)=t^d h(K[A^\pm],t^{-1})$.
\end{itemize}
\end{Proposition}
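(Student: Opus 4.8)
The plan is to exhibit an explicit reverse lexicographic Gr\"obner basis of $I_{A^\pm}$ and to read both statements off from it, using unimodularity of $A$ at the crucial points. First I would fix the term order. Since $A$ is unimodular, every reverse lexicographic initial ideal of $I_A$ is squarefree (unimodularity forces every regular triangulation of $A$ to be unimodular); I pick such an order $<_A$ on $K[\xb]$ and the analogous order $<_{-A}$ on $K[\yb]$ for $-A$, and extend them to a reverse lexicographic order $<$ on $K[\xb,\yb,z]$ in which $z$ is the smallest variable. The obvious binomials to begin with are $x_iy_i-z^2 \in I_{A^\pm}$, both of whose monomials map to $s^2$ under $\varphi_{A^\pm}$; since $z$ is smallest, the leading term of $x_iy_i-z^2$ is the squarefree monomial $x_iy_i$. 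My claim is that the reduced Gr\"obner basis of $I_{A^\pm}$ is obtained by adjoining to $\{x_iy_i-z^2 : 1 \le i \le n\}$ the suitably $z$-homogenized lifts of a Gr\"obner basis of $I_A$ (in the variables $\xb$) and of $I_{-A}$ (in the variables $\yb$). Granting the claim, every leading term is squarefree --- either some $x_iy_i$, or a squarefree monomial in the $\xb$ coming from $\mathrm{in}_{<_A}(I_A)$, or one in the $\yb$ coming from $\mathrm{in}_{<_{-A}}(I_{-A})$ --- which gives (a).

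To justify the claim I would verify Buchberger's criterion, i.e.\ that every $S$-polynomial reduces to $0$. The relations $x_iy_i \equiv z^2$ allow me to rewrite every standard monomial so that, for each index $i$, at most one of $x_i,y_i$ occurs, the homogenizing variable $z$ absorbing the degree discrepancy. The nontrivial $S$-pairs are those between some $x_iy_i-z^2$ and a lifted generator, and those between a lifted generator of $I_A$ and one of $I_{-A}$. The main obstacle I anticipate is precisely the reduction of these last, \emph{mixed} $S$-polynomials: one must show that any binomial certifying an identity $\sum_i a_i\ab_i = \sum_i a_i'\ab_i - \sum_j b_j'\ab_j$ is resolved by the proposed generators without creating a new, non-squarefree leading term. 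This is exactly where unimodularity of $A$ is indispensable, since it controls the lattice $\ker_{\ZZ}A^\pm$ finely enough that the triangulation encoded by ``$z$ smallest'' --- the pulling triangulation at the interior point $({\bf 0},1)$ --- comes out unimodular.

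For (b), normality is then immediate: a configuration whose toric ideal admits a squarefree initial ideal carries a regular unimodular triangulation, hence spans a normal and Cohen--Macaulay semigroup. For the Gorenstein property and the palindromicity I would exploit the central symmetry of $A^\pm$. The involution $\ab \mapsto -\ab$ fixes the column $\omega := ({\bf 0},1)$ and permutes the remaining columns, so $\omega$ lies in the relative interior of the cone $\RR_{\geq 0}A^\pm$ and has degree $1$. I would show that the canonical module of $K[A^\pm]$, realized as the ideal generated by the monomials attached to lattice points in the relative interior of the cone, is principal with generator $s=\varphi_{A^\pm}(z)$, by checking that every interior lattice point equals $\omega$ plus a lattice point of the semigroup; here central symmetry identifies $\omega$ as the unique lowest-degree interior point, while unimodularity makes the facet structure and the height function of the cone transparent enough to carry out this descent. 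Gorensteinness follows, and since the canonical generator sits in degree $1$, the $a$-invariant equals $-1$; as $\dim K[A^\pm]=d+1$, this is equivalent to $h(K[A^\pm],t)$ being palindromic of degree $d$, which is (b).
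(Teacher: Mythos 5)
Your proof of (a) rests on a structural claim that is false: you assert that the reduced Gr\"obner basis of $I_{A^\pm}$ consists of $\{x_iy_i-z^2 : i\in[n]\}$ together with ($z$-homogenized) Gr\"obner bases of $I_A$ in the $\xb$'s and of $I_{-A}$ in the $\yb$'s, so that every leading term would be some $x_iy_i$, a pure $\xb$-monomial, or a pure $\yb$-monomial. But $I_{A^\pm}$ in general contains \emph{mixed} binomials whose leading terms involve $x_a$ and $y_b$ with $a\neq b$, and these are in normal form with respect to your proposed set. Concretely, any quadratic relation $\ab_i+\ab_l=\ab_j+\ab_k$ in $I_A$ gives $\ab_i-\ab_j=\ab_k-\ab_l$, hence $x_iy_j-x_ky_l\in I_{A^\pm}$. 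For the unimodular configuration of the $4$-cycle, $\ab_1=(1,0,1)$, $\ab_2=(1,0,0)$, $\ab_3=(0,1,1)$, $\ab_4=(0,1,0)$, one has $I_A=(x_1x_4-x_2x_3)$ and $x_1y_2-x_3y_4\in I_{A^\pm}$; its leading monomial $x_1y_2$ is divisible by no $x_cy_c$, and by no monomial of ${\rm in}(I_A)$ or ${\rm in}(I_{-A})$ (those have degree $\geq 2$ in the $\xb$'s alone, resp.\ the $\yb$'s alone). So your set is not a Gr\"obner basis, and Buchberger's criterion fails exactly at the mixed $S$-pairs you flagged as "the main obstacle": the fix is not finer bookkeeping but genuinely new generators. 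This is visible in the paper itself: Theorem~\ref{pmGB} shows the reduced Gr\"obner basis is $\{x_iy_i-z^2\}\cup\{g_1,\dots,g_s\}$ where the $g_i$ are binomials in $K[\xb,\yb]$ mixing both groups of variables, and proving that \emph{their} monomials are squarefree is the real content. Note also that the paper does not prove Proposition~\ref{OHresults} at all; it cites \cite{OHcentrally}, where (a) is established by analyzing the corresponding (pulling) triangulation of $\Pc_{A^\pm}$, not by lifting Gr\"obner bases of $I_A$ and $I_{-A}$.

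For (b), the step "squarefree initial ideal $\Rightarrow$ regular unimodular triangulation $\Rightarrow$ normal" is sound and is indeed how \cite{OHcentrally} deduces normality from (a), but it inherits the gap in (a). Your Gorenstein argument has its own gap: the assertion that every lattice point in the relative interior of $\RR_{\geq 0}A^\pm$ is $\omega+(\hbox{a semigroup element})$ is exactly the statement that the canonical module is principal, i.e.\ that every facet of the cone has height one over $\omega$; saying that "central symmetry and unimodularity make the facet structure transparent enough to carry out this descent" names the desired conclusion rather than proving it. The final step (principal canonical module in degree $1$ gives $a$-invariant $-1$, hence a palindromic $h$-polynomial of degree $d$ since $\dim K[A^\pm]=d+1$) is correct.
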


In \cite{OHcentrally},
Proposition \ref{OHresults} (a) is shown by studying 
the corresponding triangulation of $\Pc_{A^\pm}$,
and Proposition \ref{OHresults} (b) is shown by using
Proposition \ref{OHresults} (a).
We now study
the reverse lexicographic Gr\"{o}bner bases of $I_{A^\pm}$
in detail when $A$ is a unimodular {\em configuration}.

\begin{Theorem}
\label{pmGB}
	Let $A \in \ZZ^{d \times n}$ be a unimodular configuration.
Let $<$ be the reverse lexicographic order induced by
the ordering of variables $z < x_1 < y_1 < \cdots <x_n < y_n$.
Then the reduced Gr\"{o}bner basis $\Gc$ of $I_{A^\pm}$
with respect to $<$ is of the form
\[
\{x_i y_i - z^2 : i \in [n]\} \cup \{g_1,\dots, g_s\},
\]
where $[n]:=\{1,\ldots,n\}$, $g_i \in K[x_1,\dots,x_n,y_1,\dots,y_n]$, ${\rm in}_<(g_i) \in  K[x_2,\dots,x_n,y_2,\dots,y_n]$, and any monomial of $g_i$
is squarefree.
\end{Theorem}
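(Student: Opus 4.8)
The plan is to pin down the reduced Gr\"obner basis $\Gc$ by a direct analysis of the binomials of $I_{A^\pm}$, feeding in three ingredients: the reverse lexicographic order with $z$ smallest, the linear functional defining the configuration, and the unimodularity of $A$ through Proposition~\ref{OHresults}. First I would record the evident relations: since $\varphi_{A^\pm}(x_iy_i)=\tb^{\ab_i}s\cdot\tb^{-\ab_i}s=s^2=\varphi_{A^\pm}(z^2)$, each $f_i:=x_iy_i-z^2$ lies in $I_{A^\pm}$. Because $z$ is the smallest variable, of two monomials of equal degree the one carrying fewer copies of $z$ is the larger, so ${\rm in}_<(f_i)=x_iy_i$. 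The same remark applies to an arbitrary reduced basis element $g=M-M'$: such a binomial has $\gcd(M,M')=1$, so the $z$-exponents of $M$ and $M'$ cannot both be positive, and if $M$ carried a positive power of $z$ then $z$ would be the smallest variable where $M$ and $M'$ differ, with $M$ having the larger exponent there, forcing $M<M'$ and contradicting $M={\rm in}_<(g)$. Hence the leading term of every element of $\Gc$ is $z$-free, and ${\rm in}_<(I_{A^\pm})$ is generated by $z$-free monomials, among them $x_1y_1,\dots,x_ny_n$.

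Next I would extract degree constraints from the configuration hypothesis. Let $\cb$ be the functional with $\langle\cb,\ab_i\rangle=1$ cutting out the hyperplane $\Hc$. Applying $\langle\cb,\cdot\rangle$ to the exponent identity $\sum_i(u_i-v_i)\ab_i=\sum_i(u'_i-v'_i)\ab_i$ attached to a binomial $M-M'=\xb^{\ub}\yb^{\vb}z^w-\xb^{\ub'}\yb^{\vb'}z^{w'}\in I_{A^\pm}$ yields $|\ub|-|\vb|=|\ub'|-|\vb'|$. Combined with the degree identity $|\ub|+|\vb|+w=|\ub'|+|\vb'|+w'$, the case $w=0$ of a leading term forces $|\ub|=|\ub'|+w'/2$ and $|\vb|=|\vb'|+w'/2$. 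Thus $w'$ is even, and as soon as $M'$ involves $z$ the leading monomial $M$ contains at least one $x$-variable and at least one $y$-variable. For a genuinely $z$-free element ($w=w'=0$) the same computation gives $|\ub|=|\ub'|$ and $|\vb|=|\vb'|$, so leading and trailing terms carry equally many $x$'s and equally many $y$'s.

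The heart of the proof, and the step I expect to be the main obstacle, is to show that apart from the $f_i$ no reduced basis element involves $z$ at all, i.e.\ that the $z$-containing syzygies of $A^\pm$ are generated by $f_1,\dots,f_n$. Suppose $g=M-M'$ were a reduced element with $M={\rm in}_<(g)$ divisible by no $x_iy_i$ but $z^{w'}\mid M'$ and $w'\ge 2$. The idea is to play the standardness of the normal form $M'$ against the squarefreeness coming from unimodularity: the relations $z^2\equiv x_iy_i$ let me trade each factor $z^2$ in $M'$ for some $x_iy_i$, producing $z$-free representatives in the fibre of $\varphi_{A^\pm}(M)$, and the squarefree initial ideal supplied by Proposition~\ref{OHresults}(a) (for a reverse lexicographic order with $z$ smallest, which I would verify applies to, or adapt to, the concrete order here) together with the degree bookkeeping above should force the normal form of $M$ to be $z$-free, contradicting $z^{w'}\mid M'$. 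The delicate point is termination and genuine $z$-freeness, since a trade $z^2\to x_iy_i$ can reintroduce a reducible factor; I would control this by induction on $w'$, reducing to the $z$-free toric ideal $I_{A^\pm}\cap K[\xb,\yb]$ and its behaviour under the symmetry $x_i\leftrightarrow y_i$.

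Finally, granting $z$-freeness of the remaining elements $g_1,\dots,g_s$, the shape claims follow by order bookkeeping. All monomials of each $g_i$ are squarefree because $A$ is unimodular, so ${\rm in}_<(I_{A^\pm})$ is squarefree and the equality of $x$- and $y$-counts between leading and trailing terms propagates squarefreeness from ${\rm in}_<(g_i)$ to the trailing term. That ${\rm in}_<(g_i)$ avoids $x_1$ is immediate: since $x_1$ is the smallest variable after $z$ and both terms are $z$-free, if $x_1$ divided $M$ but not the coprime $M'$ then the smallest-variable nonzero coordinate of the exponent difference would sit at $x_1$ with positive sign, forcing $M<M'$. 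Avoidance of $y_1$ I would then obtain by reducing $x_1g$ modulo $f_1$, trading $x_1y_1$ for $z^2$, and invoking the $z$-freeness of the non-$f_i$ elements to exclude the sole surviving possibility $y_1\mid M$ with $x_1\mid M'$. This gives ${\rm in}_<(g_i)\in K[x_2,\dots,x_n,y_2,\dots,y_n]$ and completes the description of $\Gc$.
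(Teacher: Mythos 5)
Your setup follows the paper's strategy (the binomials $f_i=x_iy_i-z^2$ lie in $I_{A^\pm}$, leading terms of reduced basis elements are $z$-free by the revlex convention, and the functional $\cb$ gives the bookkeeping $|\ub|=|\ub'|+w'/2$, $|\vb|=|\vb'|+w'/2$), but the two claims that actually carry the theorem are precisely the ones you leave open. The step you yourself flag as ``the main obstacle'' --- that no element of $\Gc$ besides the $f_i$ involves $z$ --- is never proved: trading a factor $z^2$ in $M'$ for some $x_iy_i$ produces a monomial divisible by $x_iy_i$, hence a monomial lying in ${\rm in}_<(I_{A^\pm})$, so it cannot play the role of a normal form, and your appeal to ``induction on $w'$'' and the symmetry $x_i\leftrightarrow y_i$ is a hope rather than an argument. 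The paper settles this with one concrete move that your sketch misses: writing $g=p-q$ with $q=z^{\ell}\xb^{\ub'}\yb^{\vb'}$ and $\ell\ge 2$, it takes $k=\min\{i\in[n] : u_i \mbox{ or } v_i \mbox{ is } 1\}$ and forms $h=p/x_k-qy_k/z^2$ (resp.\ $h'=p/y_k-qx_k/z^2$) in $I_{A^\pm}$; since $v_k=0$ (resp.\ $u_k=0$) this binomial is nonzero, and the \emph{minimality} of $k$ is exactly what makes the reverse lexicographic comparison yield ${\rm in}_<(h)=p/x_k$, a proper divisor of $p$ lying in ${\rm in}_<(I_{A^\pm})$, contradicting that $p$ is a minimal generator of the initial ideal. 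Without some such device, the case $\ell\ge 2$ is simply not excluded.

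The second gap is the squarefreeness of the trailing terms: your claim that ``the equality of $x$- and $y$-counts between leading and trailing terms propagates squarefreeness'' is not a valid inference (for instance $x_2x_3y_4y_5$ and $x_6^2y_7^2$ have equal counts), and unimodularity cannot be invoked directly here because $A^\pm$ is in general \emph{not} unimodular (already for $A=(1)$ the maximal minors of $A^\pm$ are $\pm 1,\pm 2$). This is where the paper does genuine work: from $g=\xb^{\mu_1}\yb^{\eta_1}-\xb^{\mu_2}\yb^{\eta_2}$ it passes to $\xb^{\mu_1+\eta_2}-\xb^{\mu_2+\eta_1}\in I_A$, applies the circuit decomposition lemma \cite[Lemma~4.32]{BinomialIdeals} together with the fact that circuits of the unimodular matrix $A$ have squarefree terms \cite[Theorem~4.35]{BinomialIdeals}, lifts the circuit back to $f'=\xb^{\wb_1}\yb^{\wb_2'}z^{\alpha}-\xb^{\wb_2}\yb^{\wb_1'}z^{\beta}\in I_{A^\pm}$ with $\alpha\beta=0$, and kills the three cases $\alpha>0$, $\beta>0$, $\alpha=\beta=0$ against the defining properties of a reduced Gr\"obner basis. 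None of this machinery appears in your proposal, and I see no shortcut around it. (By contrast, your $x_1$-avoidance argument is correct, and your $y_1$-avoidance sketch can be completed --- though the paper's route is cleaner: ${\rm in}_<(pz^2/y_1-qx_1)=qx_1$ is non-squarefree, and radicality of the squarefree initial ideal then forces $q\in{\rm in}_<(I_{A^\pm})$, contradicting that $q$ is a normal form.)
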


\begin{proof}
Let $\Gc$ be the reduced Gr\"{o}bner basis
and let $\Gc_1=  \{x_i y_i - z^2 : i \in [n]\}$.
It is easy to see that $\Gc_1$ is a subset of $\Gc$. 
Let 
\begin{eqnarray}
\label{csbinomial}
g= p -q=
x_1^{u_1} \dots x_n^{u_n} 
y_1^{v_1} \dots y_n^{v_n}
-
z^\ell 
x_1^{u_1'} \dots x_n^{u_n'} 
y_1^{v_1'} \dots y_n^{v_n'} 
\end{eqnarray}
be a binomial in $\Gc \setminus \Gc_1$ with ${\rm in}_<(g) = p$.
Since $g$ belongs to the reduced Gr\"{o}bner basis,
(i) $g$ must be irreducible, (ii) $p$ belongs to the minimal set of monomial generators
of ${\rm in}_<(I_{A^\pm})$, and (iii) $q \notin {\rm in}_<(I_{A^\pm})$.
By Proposition~\ref{OHresults} (a), $p$ is squarefree, that is,
$u_i, v_i \in \{0,1\}$.
Moreover, 
if $u_i = v_i = 1$, then $p$ is divisible by the initial monomial $x_i y_i$ of $x_i y_i -z^2$, a contradiction to the assumption that $\Gc$ is the reduced Gr\"{o}bner basis.
Hence $(u_i ,v_i) \in \{(0,0), (0,1), (1,0)\}$ for each $i$.
Since $g$ belongs to $I_{A^\pm}$, we have 
\begin{equation}
\label{basiceq}
\sum_{k=1}^n u_k \ab_k + 
\sum_{k=1}^n v_k (-\ab_k) =
\sum_{k=1}^n u_k' \ab_k + 
\sum_{k=1}^n v_k' (-\ab_k)
\end{equation}
and 
$
\sum_{k=1}^n u_k + \sum_{k=1}^n v_k
=
\ell + \sum_{k=1}^n u_k' + \sum_{k=1}^n v_k'
$.
Moreover since $A$ is a configuration, there exists a vector $\cb \in \RR^d$ such that
the inner product $\left< \ab_i , \cb \right>= 1 $ for all $1 \le i \le n$.
Taking the inner product of equation (\ref{basiceq}) with $\cb$,
we have
\[
\sum_{k=1}^n u_k  -
\sum_{k=1}^n v_k  =
\sum_{k=1}^n u_k'  -
\sum_{k=1}^n v_k'  .
\]
Thus 
$\ell = 2 (\sum_{k=1}^n u_k  - \sum_{k=1}^n u_k')$, and hence $\ell$ is even.

Suppose that $\ell > 0$.
Since $\ell$ is even, we have $\ell \ge 2$.
Let $k = \min\{ i \in [n] : u_i \mbox{ or } v_i \mbox{ is } 1\}$.
If $u_k =1$, then $h= p/x_k -  q y_k/z^2$ belongs to $I_{A^\pm}$.
Since $v_k = 0$, we have $h \neq 0$.
Thus ${\rm in}_<(h) =p/x_k$ divides $p$, a contradiction.
If $v_k =1$, then $h'= p/y_k -  q x_k/z^2$ belongs to $I_{A^\pm}$.
Since $u_k = 0$, we have $h' \neq 0$.
Thus ${\rm in}_<(h') =p/y_k$ divides $p$, a contradiction.
Therefore $\ell = 0$, and hence $g \in  K[x_1,\dots,x_n,y_1,\dots,y_n]$.

Since $g$ is irreducible and $<$ is a reverse lexicographic order,
 $x_1$ does not appear in $p$.
Suppose that $y_1$ appears in $p$.
Since $<$ is a reverse lexicographic order, $q$ is divisible by $x_1$.
Then $h= p z^2/y_1 -  q x_1$ belongs to $I_{A^\pm}$.
The initial monomial of $h$ is $q x_1$ that is not squarefree.
Since the initial ideal is squarefree, $q$ belongs to the initial ideal.
This contradicts that $g$ belongs to the reduced  Gr\"{o}bner basis.
Thus 
$p \in  K[x_2,\dots,x_n,y_2,\dots,y_n]$.

Finally, suppose that $q$ is not squarefree.
Let $g = \xb^{{\mu}_1} \yb^{\eta_1} - \xb^{\mu_2} \yb^{\eta_2}$.
By equation (\ref{basiceq}), $\xb^{\mu_1 + \eta_2} - \xb^{\mu_2 + \eta_1}$ belongs to
the toric ideal $I_{A}$
and $\xb^{\mu_1 + \eta_2} - \xb^{\mu_2 + \eta_1}$
has a monomial that is not squarefree.
An irreducible binomial $h$ of $I_A$ is called a {\em circuit} of $I_A$ if
there is no binomial $0 \ne h' \in I_A$ such that ${\rm supp}(h') \subsetneq {\rm supp}(h) $.
Here ${\rm supp}(h)$ is the set of all variables appearing in $h$.
By \cite[Lemma~4.32]{BinomialIdeals} there exists a circuit $f= \xb^{\wb_1+\wb_1'} - \xb^{\wb_2+\wb_2'}\in I_A$
such that 
\[
{\rm supp}( \xb^{\wb_1} )  \subset {\rm supp}( \xb^{\mu_1} ) ,\ 
{\rm supp}( \xb^{\wb_1'} )  \subset {\rm supp}( \xb^{\eta_2} ) ,\ 
{\rm supp}( \xb^{\wb_2} )  \subset {\rm supp}( \xb^{\mu_2} ) ,\ 
{\rm supp}( \xb^{\wb_2'} )  \subset {\rm supp}( \xb^{\eta_1} ) .
\]
Since $A$ is unimodular, by \cite[Theorem 4.35]{BinomialIdeals}, both $\xb^{\wb_1+\wb_1'}$ and $\xb^{\wb_2+\wb_2'}$ are squarefree.
%
%
%
Hence we have $\deg (f) < \deg (g)$.
Note that $f'=\xb^{\wb_1} \yb^{\wb_2'} z^\alpha - \xb^{\wb_2} \yb^{\wb_1'} z^\beta$ belongs to 
$I_{A^\pm}$ for some $\alpha, \beta \in \ZZ_{\ge 0}$ with $\alpha \beta=0$.
If $\alpha >0$, then $ \xb^{\wb_2} \yb^{\wb_1'}$ belongs to the initial ideal,
and hence so does $q$.
This contradicts that $g$ belongs to the reduced  Gr\"{o}bner basis.
If $\beta >0$, then $\xb^{\mu_1} \yb^{\eta_1}$ is divisible by 
${\rm in}_<(f') = \xb^{\wb_1} \yb^{\wb_2'}$
and hence $\xb^{\mu_1} \yb^{\eta_1} = {\rm in}_<(f') $.
Then $q$ is the initial monomial of a binomial $f' -g \in I_{A^\pm}$, a contradiction.
If $\alpha=\beta=0$, then $\deg (f') < \deg (g)$ and 
either $p$ or $q$ is divisible by ${\rm in}_<(f')$,
a contradiction.
Thus it follows that $q$ is squarefree.
\end{proof}

	Let $A=(\ab_1,\ldots,\ab_n) \in \ZZ^{d \times n}$ be a unimodular configuration.
	Assume that $\Pc_A \cap \ZZ^d = \{\ab_1,\ldots,\ab_n \}$. 
The toric ideal $I_{\Pc_A*(-\Pc_A)}$ of $\Pc_A*(-\Pc_A)$ is the kernel of 
a ring homomorphism 
\[
\varphi_* : K[x_1,\dots,x_n,y_1,\dots,y_n] \rightarrow K[t_0^{\pm 1}, t_1^{\pm 1},\dots, t_d^{\pm 1}, s]
\]
defined by 
$\varphi_* (x_i) = t_0 \tb^{\ab_i} s$,
$\varphi_* (y_i) = \tb^{-\ab_i} s$
for each $1 \le i \le n$.

\begin{Theorem}
\label{CayleyGB}
Work with the same notation as above.
Let $<'$ be the reverse lexicographic order induced by
the ordering of variables $x_1 <' y_1 <' \cdots <'x_n <' y_n$.
Then the reduced Gr\"{o}bner basis of $I_{\Pc_A*(-\Pc_A)}$
with respect to $<'$ is
\[
\Gc' = \{x_i y_i - x_1 y_1 : i =2,\dots, n\} \cup \{g_1,\dots, g_s\},
\]
where $ \{g_1,\dots, g_s\}$ is such that $\{x_i y_i - z^2 : i \in [n]\} \cup \{g_1,\dots, g_s\}$ is
the reduced Gr\"{o}bner basis of $I_{A^\pm}$ from Theorem~\ref{pmGB}.
\end{Theorem}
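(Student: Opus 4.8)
The plan is to transfer the Gr\"obner basis of $I_{A^\pm}$ from Theorem~\ref{pmGB} across the specialization $t_0=1$. Observe first that for any $z$-free monomial $m$ one has $\varphi_*(m)|_{t_0=1}=\varphi_{A^\pm}(m)$, so that $\varphi_*(f)=0$ forces $\varphi_{A^\pm}(f)=0$ for every $f$ in $R:=K[x_1,\dots,x_n,y_1,\dots,y_n]$; hence $I_{\Pc_A*(-\Pc_A)}\subseteq I_{A^\pm}$. To see $\Gc'\subseteq I_{\Pc_A*(-\Pc_A)}$, the binomials $x_iy_i-x_1y_1$ are immediate from $\varphi_*(x_iy_i)=t_0s^2$, while each $g_j$ is $z$-free and lies in $I_{A^\pm}$; taking the inner product of its exponent equation (\ref{basiceq}) with the vector $\cb$ (which exists since $A$ is a configuration) shows the two monomials of $g_j$ have equal $x$-degree, i.e.\ equal $t_0$-degree under $\varphi_*$, whence $g_j\in I_{\Pc_A*(-\Pc_A)}$. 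Since the $g_j$ are $z$-free, ${\rm in}_{<'}(g_j)={\rm in}_<(g_j)$, and a reverse-lexicographic comparison gives ${\rm in}_{<'}(x_iy_i-x_1y_1)=x_iy_i$; thus $J':=\langle{\rm in}_{<'}(\Gc')\rangle=\langle x_2y_2,\dots,x_ny_n\rangle+\langle{\rm in}_<(g_1),\dots,{\rm in}_<(g_s)\rangle$, which differs from ${\rm in}_<(I_{A^\pm})$ only by the generator $x_1y_1$.

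The heart of the argument is to prove that $\Gc'$ is a Gr\"obner basis, i.e.\ that $J'={\rm in}_{<'}(I_{\Pc_A*(-\Pc_A)})$. Since $J'\subseteq{\rm in}_{<'}(I_{\Pc_A*(-\Pc_A)})$, the standard monomials of $J'$ contain, in each degree, a $K$-basis of $K[\Pc_A*(-\Pc_A)]$ and hence span it; it therefore suffices to show they are linearly independent modulo $I_{\Pc_A*(-\Pc_A)}$, equivalently (as $I_{\Pc_A*(-\Pc_A)}\subseteq I_{A^\pm}$) modulo $I_{A^\pm}$. A monomial standard for $J'$ has the form $m=x_1^ay_1^b r$ with $r\in K[x_2,\dots,x_n,y_2,\dots,y_n]$ avoiding every $x_jy_j$ $(j\ge2)$ and every ${\rm in}_<(g_l)$; the only reduction of $m$ by the reduced Gr\"obner basis $\{x_iy_i-z^2\}\cup\{g_l\}$ of $I_{A^\pm}$ is through $x_1y_1-z^2$, and it normalizes $m$ to the $I_{A^\pm}$-standard monomial $\bar m=z^{2\min(a,b)}x_1^{(a-b)^+}y_1^{(b-a)^+}r$. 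The assignment $m\mapsto\bar m$ is injective, so distinct $J'$-standard monomials reduce to distinct $I_{A^\pm}$-standard monomials, which are linearly independent modulo $I_{A^\pm}$. This yields the required independence, and comparing dimensions degreewise forces $J'={\rm in}_{<'}(I_{\Pc_A*(-\Pc_A)})$.

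Finally, reducedness of $\Gc'$ is inherited from that of the basis in Theorem~\ref{pmGB}: the generators of $J'$ form a minimal set, being a subset of the minimal monomial generators of ${\rm in}_<(I_{A^\pm})$; the tails of the $g_j$ remain standard since the defining conditions for $J'$ are weaker than those for ${\rm in}_<(I_{A^\pm})$; and the tail $x_1y_1$ of $x_iy_i-x_1y_1$ is standard because $x_1,y_1$ occur in no generator of $J'$. I expect the main obstacle to be precisely the Gr\"obner step: the naive strategy of reducing an initial monomial of $I_{\Pc_A*(-\Pc_A)}$ against $\Gc'$ directly fails, because such a monomial may be divisible by $x_1y_1\notin J'$ and by no other generator; the linear-independence argument above, routed through the normal form in $I_{A^\pm}$, is what circumvents this.
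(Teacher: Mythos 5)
Your proof is correct, and it reaches the Gr\"obner property by a genuinely different mechanism than the paper's. The preliminary steps agree: both you and the paper verify $\Gc' \subseteq I_{\Pc_A*(-\Pc_A)}$ via the inner product with $\cb$ (equal $x$-degrees of the two monomials of each $g_j$, hence equal $t_0$-degrees under $\varphi_*$), identify the initial terms ${\rm in}_{<'}(x_iy_i-x_1y_1)=x_iy_i$ and ${\rm in}_{<'}(g_j)={\rm in}_<(g_j)$, and deduce reducedness from the fact that $x_1,y_1$ occur in no initial term of $\Gc'$. For the central step, however, the paper argues by contradiction using the binomial criterion of \cite[Theorem~3.11]{BinomialIdeals}: if $\Gc'$ were not a Gr\"obner basis, there would exist an irreducible binomial $f=u-v \in I_{\Pc_A*(-\Pc_A)} \subseteq I_{A^\pm}$ with both monomials outside $\left<{\rm in}_{<'}(\Gc')\right>$; since $\Gc$ is a Gr\"obner basis of $I_{A^\pm}$, some initial term of $\Gc$ divides ${\rm in}_{<'}(f)$, the only possibility being $x_1y_1$; irreducibility then forces the trailing monomial of $f$ to be free of $x_1$, which is impossible for a reverse lexicographic order whose smallest variable is $x_1$. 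Your route is instead linear-algebraic: writing $J'=\left<{\rm in}_{<'}(\Gc')\right>$, you prove the $J'$-standard monomials are linearly independent modulo $I_{A^\pm}$ by the explicit normal-form injection $x_1^a y_1^b r \mapsto z^{2\min(a,b)}x_1^{(a-b)^+}y_1^{(b-a)^+}r$ into the standard monomials of ${\rm in}_<(I_{A^\pm})$, and conclude by a Macaulay-type degreewise dimension count. The paper's argument is shorter and leans on the binomial structure of toric ideals; yours avoids the irreducible-binomial criterion entirely, and your standard-monomial injection is essentially the combinatorial content of the ring isomorphism between $K[x_1,\dots,y_n,z]/{\rm in}_<(I_{A^\pm})$ and a polynomial extension of $K[x_1,\dots,y_n]/{\rm in}_{<'}(I_{\Pc_A*(-\Pc_A)})$ that the paper only sets up later, in the proofs of Theorems~\ref{main1} and \ref{main2}, to compare $h$-polynomials; in that sense your proof front-loads information the paper needs anyway.
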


\begin{proof}
It is easy to see that $x_i y_i - x_1 y_1$ belongs to $I_{\Pc_A*(-\Pc_A)}$.
In the proof of Theorem~\ref{pmGB}, it is proved that each $g_i$ is of the form (\ref{csbinomial})
with $0 = \ell = 2(\sum_{k=1}^n u_k - \sum_{k=1}^n u_k')$.
Thus 
$\sum_{k=1}^n u_k = \sum_{k=1}^n u_k'$.
Hence each $g_i$ belongs to $I_{\Pc_A*(-\Pc_A)}$
and $\Gc'$ is a subset of  $I_{\Pc_A*(-\Pc_A)}$.
The initial monomial of each binomial in $\Gc'$ is 
${\rm in}_{<'} (x_i y_i -x_1 y_1) =x_i y_i $
and
${\rm in}_{<'} (g_i) = {\rm in}_< (g_i)$.

Suppose that $\Gc'$ is not a Gr\"{o}bner basis of $I_{\Pc_A*(-\Pc_A)}$.
By \cite[Theorem~3.11]{BinomialIdeals}, there exists an irreducible binomial $0 \ne f= u - v \in I_{\Pc_A*(-\Pc_A)}$
such that $u,v \notin \left<  {\rm in}_{<'} (\Gc')  \right>$.
Then $f \in I_{\Pc_{A^\pm}} \cap  K[x_1,\dots,x_n,y_1,\dots,y_n] $.
Since $\Gc$ in Theorem \ref{pmGB} is a Gr\"{o}bner basis of $ I_{\Pc_{A^\pm}}$,
there exists $g \in \Gc$ such that ${\rm in}_<(g)$
divides ${\rm in}_<(f) = {\rm in}_{<'} (f)$.
If $g \neq x_1 y_1 -z^2$, then there exists $g' \in \Gc'$ such that ${\rm in}_<(g) = {\rm in}_{<'}(g')$, a contradiction.
Hence $g = x_1 y_1 -z^2$ and ${\rm in}_{<'} (f)$
is divisible by $x_1 y_1$.
Since $f$ is irreducible, the monomial $f - {\rm in}_{<'} (f)$ does not contain $x_1$.
This contradicts the definition of $<'$.
Thus $\Gc'$ is a Gr\"{o}bner basis of $I_{\Pc_A*(-\Pc_A)}$.
Moreover, since $\{x_i y_i - z^2 : i \in [n]\} \cup \{g_1,\dots, g_s\}$ is
the reduced Gr\"{o}bner basis of $I_{A^\pm}$ and 
$x_1$ and $y_1$ do not appear in any ${\rm in}_{<'} (g_i)$,
it follows that $\Gc'$ is reduced .
\end{proof}

	Let $A=(\ab_1,\ldots,\ab_n) \in \ZZ^{d \times n}$ be a unimodular configuration and set $A_0=(A, {\bf 0}) \in \ZZ^{d \times (n+1)}$.
	Assume that $\Pc_{A_0} \cap \ZZ^d=\{\ab_1,\ldots,\ab_n,{\bf 0}\}$. 
The toric ideal $I_{\Pc_{A_0}*(-\Pc_{A_0})}$ of $\Pc_{A_0}*(-\Pc_{A_0})$ is the kernel of 
a ring homomorphism 
\[
\varphi_* : K[x_0, x_1,\dots,x_n,y_0,y_1,\dots,y_n] \rightarrow K[t_0^{\pm 1 }, t_1^{\pm 1},\dots, t_d^{\pm 1}, s]
\]
defined by 
$\varphi_* (x_i) = t_0 \tb^{\ab_i} s$,
$\varphi_* (y_i) = \tb^{-\ab_i} s$
for each $1 \le i \le n$,
and
$\varphi_* (x_0) =t_0 s$,
$\varphi_* (y_0) =s$.

\begin{Theorem}
\label{AzeroGB}
Work with the same notation as above.
Let $<'$ be the reverse lexicographic order induced by
the ordering of variables $x_0 <' y_0 <' \cdots <'x_n <' y_n$.
Then the reduced Gr\"{o}bner basis of $I_{\Pc_{A_0}*(-\Pc_{A_0})}$
with respect to $<'$ is
\[
\Gc' = \{x_i y_i - x_0 y_0 : i  \in [n]\} \cup \{g_1,\dots, g_s\},
\]
where $ \{g_1,\dots, g_s\}$ is such that $\{x_i y_i - z^2 : i \in [n]\} \cup \{g_1,\dots, g_s\}$ is
the reduced Gr\"{o}bner basis of $I_{A^\pm}$ from Theorem~\ref{pmGB}.

\end{Theorem}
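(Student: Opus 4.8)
The plan is to follow the proof of Theorem~\ref{CayleyGB} almost verbatim, the only new feature being the extra pair of variables $x_0,y_0$ attached to the origin. First I would record that $\Gc'\subset I_{\Pc_{A_0}*(-\Pc_{A_0})}$: for the quadrics one computes $\varphi_*(x_iy_i)=t_0s^2=\varphi_*(x_0y_0)$, and for the $g_i$ one uses that in the proof of Theorem~\ref{pmGB} each $g_i$ has $\ell=0$, so its two monomials carry the same number of $x$-variables and the same total $y$-degree; since the restriction of $\varphi_*$ to $K[x_1,\dots,x_n,y_1,\dots,y_n]$ agrees with the map used in Theorem~\ref{CayleyGB}, the membership $g_i\in I_{\Pc_{A_0}*(-\Pc_{A_0})}$ is immediate. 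Next I would identify initial monomials: because $x_0,y_0$ are the two smallest variables in $<'$ and the order is reverse lexicographic, ${\rm in}_{<'}(x_iy_i-x_0y_0)=x_iy_i$, while ${\rm in}_{<'}(g_i)={\rm in}_<(g_i)$ since $<$ and $<'$ induce the same order on $x_1,\dots,x_n,y_1,\dots,y_n$. Thus the set of initial monomials of $\Gc'$ coincides exactly with that of the reduced basis $\Gc$ of $I_{A^\pm}$.

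The heart of the argument is a transfer map $\psi\colon K[x_0,\dots,x_n,y_0,\dots,y_n]\to K[x_1,\dots,x_n,y_1,\dots,y_n,z]$ defined by $\psi(x_0)=\psi(y_0)=z$, $\psi(x_i)=x_i$, $\psi(y_i)=y_i$. Since $\varphi_*(u)=t_0^{e}\,\varphi_{A^\pm}(\psi(u))$, where $e$ is the total degree of $u$ in $x_0,\dots,x_n$, and since $\varphi_*(u)=\varphi_*(v)$ forces both monomials of any $f\in I_{\Pc_{A_0}*(-\Pc_{A_0})}$ to have equal such degree, I would check that $\psi$ carries the ideal into $I_{A^\pm}$. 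Suppose, for contradiction, that $\Gc'$ is not a Gr\"{o}bner basis; by \cite[Theorem~3.11]{BinomialIdeals} there is an irreducible binomial $0\ne f=u-v\in I_{\Pc_{A_0}*(-\Pc_{A_0})}$ with $u,v\notin\langle{\rm in}_{<'}(\Gc')\rangle$. A short bookkeeping with irreducibility shows $\psi(f)\ne0$, so $\psi(f)$ is a nonzero binomial in $I_{A^\pm}$ and ${\rm in}_<(\psi(f))=\psi(w)$ for some $w\in\{u,v\}$. As $\Gc$ is a Gr\"{o}bner basis of $I_{A^\pm}$, some ${\rm in}_<(g)$ with $g\in\Gc$ divides $\psi(w)$.

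Here the $A_0$ case is cleaner than the Cayley case. Every initial monomial of $\Gc$, namely each $x_iy_i$ ($i\in[n]$) and each ${\rm in}_<(g_j)$, is free of $z$; and $\psi(w)=z^{c}m_w$, where $m_w$ is the part of $w$ in $x_1,\dots,y_n$ and $m_w\mid w$. Hence ${\rm in}_<(g)\mid m_w\mid w$. If $g=g_j$ then ${\rm in}_<(g_j)={\rm in}_{<'}(g_j)$ divides $w$; if $g=x_iy_i-z^2$ then $x_iy_i={\rm in}_{<'}(x_iy_i-x_0y_0)$ divides $w$. In either case $w\in\langle{\rm in}_{<'}(\Gc')\rangle$, a contradiction. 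Note that, unlike in Theorem~\ref{CayleyGB}, every $x_iy_i$ with $i\in[n]$ survives as an initial monomial of $\Gc'$, because the common tail is now the fresh monomial $x_0y_0$; this is exactly why no exceptional case $g=x_1y_1-z^2$ arises and the final reverse-lexicographic argument of Theorem~\ref{CayleyGB} becomes unnecessary. Reducedness then follows because the initial monomials of $\Gc'$ are precisely those of the reduced basis $\Gc$—hence pairwise non-dividing—while the tail $x_0y_0$ and the tails of the $g_i$ avoid this initial ideal (the former involving the new variables $x_0,y_0$, the latter by reducedness of $\Gc$).

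I expect the only genuinely delicate point to be the verification that $\psi(f)\ne0$ for an irreducible $f$: one must rule out the collapse $\psi(u)=\psi(v)$, which forces the $x_1,\dots,y_n$-parts of $u$ and $v$ to coincide and then, by irreducibility together with the equalities of $t_0$- and $s$-degrees imposed by $f\in I_{\Pc_{A_0}*(-\Pc_{A_0})}$, forces $u=v$, contradicting $f\ne0$. Everything else is a routine transfer of the Cayley argument through $\psi$.
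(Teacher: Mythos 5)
Your proof is correct, and it reorganizes the core of the argument in a way that is genuinely cleaner than the paper's, even though both proofs share the same skeleton (assume $\Gc'$ is not a Gr\"obner basis, extract an irreducible binomial $f=u-v$ with $u,v\notin\langle{\rm in}_{<'}(\Gc')\rangle$, transfer $f$ into $I_{A^\pm}$, and contradict the fact that $\Gc$ is a Gr\"obner basis there, using that ${\rm in}_<(\Gc)={\rm in}_{<'}(\Gc')$). The difference is the transfer mechanism. The paper works by hand: it writes out all exponents of $f$, uses the configuration vector $\cb$ with $\langle \ab_i,\cb\rangle=1$ together with the $t_0$- and $s$-degree constraints to get $v_0-u_0=v_0'-u_0'$, splits into the case $u_0=u_0'=v_0=v_0'=0$ (where $f$ itself lies in $I_{A^\pm}$) and the case where, after invoking irreducibility, $f=(x_0y_0)^{u_0}m-m'$ with $u_0=v_0>0$, and only then substitutes $(x_0y_0)^{u_0}\mapsto z^{2u_0}$ to land in $I_{A^\pm}$; it also has to identify ${\rm in}_<(f')$ as the $z$-free monomial. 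Your substitution homomorphism $\psi$ ($x_0,y_0\mapsto z$, identity elsewhere) does all of this uniformly: the identity $\varphi_*(u)=t_0^{e(u)}\varphi_{A^\pm}(\psi(u))$ shows at once that binomials of $I_{\Pc_{A_0}*(-\Pc_{A_0})}$ map into $I_{A^\pm}$, the case analysis disappears (your argument does not even need the configuration vector $\cb$, only the $t_0$- and $s$-gradings), and the observation that every monomial in ${\rm in}_<(\Gc)$ is $z$-free lets you descend divisibility from $\psi(w)$ to $w$ without deciding which monomial of $\psi(f)$ is initial. You correctly flag $\psi(f)\neq 0$ as the one point needing care, and your verification of it is sound (in fact the degree constraints alone already force $u=v$ there). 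A further small plus: you spell out the reducedness of $\Gc'$ (initial monomials coincide with those of the reduced basis $\Gc$, and the tails $x_0y_0$ and the tails of the $g_i$ avoid the initial ideal), which the paper's proof of this theorem leaves implicit.
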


\begin{proof}
Each $g_i$ belongs to $I_{\Pc_{A_0}*(-\Pc_{A_0})}$
since $I_{\Pc_A*(-\Pc_A)} \subset I_{\Pc_{A_0}*(-\Pc_{A_0})}$.
Since each $x_i y_i - x_0 y_0$ belongs to $I_{\Pc_{A_0}*(-\Pc_{A_0})}$, the set $\Gc'$ is a subset of $I_{\Pc_{A_0}*(-\Pc_{A_0})}$.
The initial monomial of each binomial in $\Gc'$ is 
${\rm in}_{<'} (x_i y_i -x_0 y_0) =x_i y_i $
and
${\rm in}_{<'} (g_i) = {\rm in}_< (g_i)$.
Note that $ {\rm in}_{<} (\Gc)  =  {\rm in}_{<'} (\Gc') $.

Suppose that $\Gc'$ is not a Gr\"{o}bner basis of $I_{\Pc_{A_0}*(-\Pc_{A_0})}$.
By \cite[Theorem~3.11]{BinomialIdeals}, there exists an irreducible binomial $0 \ne f= u - v \in I_{\Pc_{A_0}*(-\Pc_{A_0})}$
such that $u,v \notin \left<  {\rm in}_{<'} (\Gc')  \right>$.
Let
\[f=
x_0^{u_0}
x_1^{u_1} \dots x_n^{u_n} 
y_0^{v_0}
y_1^{v_1} \dots y_n^{v_n}
-
x_0^{u_0'}
x_1^{u_1'} \dots x_n^{u_n'} 
y_0^{v_0'}
y_1^{v_1'} \dots y_n^{v_n'}.
\]
Then we have 
\begin{eqnarray}
\label{jou}
\sum_{k=1}^n u_k \ab_k + 
\sum_{k=1}^n v_k (-\ab_k) =
\sum_{k=1}^n u_k' \ab_k + 
\sum_{k=1}^n v_k' (-\ab_k),
\end{eqnarray}
$
\sum_{k=0}^n u_k  
=
\sum_{k=0}^n u_k'  
$
and
$
 \sum_{k=0}^n v_k
=
\sum_{k=0}^n v_k'
$.
Moreover since $A$ is a configuration, there exists a vector $\cb \in \RR^d$ such that
the inner product $\left< \ab_i , \cb \right> = 1$ for all $1 \le i \le n$.
Taking the inner product of equation (\ref{jou}) with $\cb$, we have
\[
\sum_{k=1}^n u_k  -
\sum_{k=1}^n v_k  =
\sum_{k=1}^n u_k'  -
\sum_{k=1}^n v_k'  .
\]
Thus $v_0 - u_0 = v_0' - u_0' $.

If $u_0 = u_0'=v_0= v_0'= 0$, then $f \in K[x_1,\dots,x_n,y_1,\dots,y_n]$.
Then $f$ belongs to $I_{A^\pm}$.
This contradicts that $\Gc$ is a Gr\"{o}bner basis of $I_{A^\pm}$.
Hence at least one of $u_0$, $u_0'$, $v_0$, $v_0'$ is positive.
Since $f$ is irreducible, we may assume that $u_0 > 0$ and $u_0' = 0$.
Then $v_0 = u_0 + v_0' >0$
and hence $v_0'=0$.
Thus $u_0 = v_0 > 0$ and $u_0' = v_0' = 0$, that is,
\[
f=
(x_0 y_0)^{u_0}
x_1^{u_1} \dots x_n^{u_n} 
y_1^{v_1} \dots y_n^{v_n}
-
x_1^{u_1'} \dots x_n^{u_n'} 
y_1^{v_1'} \dots y_n^{v_n'}.
\]
It then follows that 
\[
f'=
z^{2u_0}
x_1^{u_1} \dots x_n^{u_n} 
y_1^{v_1} \dots y_n^{v_n}
-
x_1^{u_1'} \dots x_n^{u_n'} 
y_1^{v_1'} \dots y_n^{v_n'}
\]
belongs to $I_{A^\pm}$.
Since $\Gc$ is a Gr\"{o}bner basis of $I_{A^\pm}$,
there exists $g \in \Gc$ such that ${\rm in}_<(g)$ divides
${\rm in}_<(f') = x_1^{u_1'} \dots x_n^{u_n'} 
y_1^{v_1'} \dots y_n^{v_n'}$.
Since there exists $g' \in \Gc'$ such that ${\rm in}_<(g) ={\rm in}_{<'} (g')$, this is a contradiction.
\end{proof}

\section{Proofs of Theorems}

\label{proofsection}

In the present section, we give proofs of Theorems~\ref{main1} and \ref{main2}
by using the Gr\"{o}bner bases constructed in Section~\ref{gbsection}.
First, we recall the theory of the $h^*$-polynomials of lattice polytopes.
Two lattice polytopes $\Pc \subset \RR^d$ and $\Pc' \subset \RR^{d'}$ are said to be \emph{unimodularly equivalent} if there exists an affine map from the affine span ${\rm aff}(\Pc)$ of $\Pc$ to the affine span ${\rm aff}(\Pc')$ of $\Pc'$ that maps $\ZZ^d \cap {\rm aff}(\Pc)$ bijectively onto $\ZZ^{d'} \cap {\rm aff}(\Pc')$ and that maps $\Pc$ to $\Pc'$. 
Note that every lattice polytope is unimodularly equivalent to a full-dimensional one.
Let $\Pc \subset \RR^d$ be a lattice polytope of dimension $d$.
Given a positive integer $m$, we define
\[L_{\Pc}(m)=|m \Pc \cap \ZZ^d|,\]
where $m\Pc:=\{m \ab : \ab \in \Pc \}$.
Ehrhart \cite{Ehrhart} proved that $L_{\Pc}(m)$ is a polynomial in $m$ of degree $d$ with the constant term $1$.
We say that $L_{\Pc}(m)$ is the \textit{Ehrhart polynomial} of $\Pc$.
Clearly, if $\Pc$ and $\Qc$ are unimodularly equivalent, then one has $L_{\Pc}(m)=L_{\Qc}(m)$.
The generating function of the lattice point enumerator, i.e., the formal power series
\[\text{Ehr}_\Pc(t)=1+\sum\limits_{k=1}^{\infty}L_{\Pc}(k)t^k\]
is called the \textit{Ehrhart series} of $\Pc$.
It is well known that it can be expressed as a rational function of the form
\[\text{Ehr}_\Pc(t)=\frac{h^*(\Pc,t)}{(1-t)^{d+1}}.\]
Then $h^*(\Pc,t)$ is a polynomial in $t$ of degree at most $d$ with nonnegative integer coefficients (\cite{Stanleynonnegative}) and it
is called
the \textit{$h^*$-polynomial} (or the \textit{$\delta$-polynomial}) of $\Pc$. 
A characterization of Gorenstein polytopes in terms of their $h^*$-polynomials is known.
In fact, a lattice polytope $\Pc$ of dimension $d$ is Gorenstein of index $r$ if and only if its $h^*$-polynomial is of degree $d+1-r$ and palindromic, i.e., $h^*(\Pc,t)=t^{d+1-r}h^*(\Pc,t^{-1})$ (\cite{DeNegriHibi}).

We say that a lattice polytope $\Pc \subset \RR^d$ possesses the \textit{integer decomposition property} if for any $k \in \ZZ_{> 0}$ and for any $\xb \in k \Pc \cap \ZZ^d$, there exist $k$ lattice points $\xb_1,\ldots,\xb_k$ in $\Pc$ with $\xb=\xb_1+\cdots+\xb_k$, and we call a lattice polytope with the integer decomposition property an \textit{IDP} polytope.
   IDP polytopes turn up in many fields of mathematics such as algebraic geometry, where they correspond to projectively normal embeddings of toric varieties, and commutative algebra, where they correspond to standard graded Cohen-Macaulay domains (see \cite{BrunsGubeladze}).
Moreover, the integer decomposition property is particularly important in the theory and application of integer programing \cite[\S 22.10]{integer}.
Note that a lattice polytope with a unimodular triangulation is IDP, and an IDP polytope is spanning.
Moreover, a lattice polytope is IDP if and only if its $h^*$-polynomial coincides with the $h$-polynomial of its toric ring.

We turn to the review of indispensable lemmata for our proofs of Theorems \ref{main1} and \ref{main2}.

\begin{Lemma}[{\cite[Corollary 6.1.5]{Monomial}}]
\label{lem:initialh}
	Let $S$ be a polynomial ring and $I \subset S$ a graded ideal of $S$. Fix a monomial order $<$ on $S$.
	Then $S/I$ and $S/{\rm in}_{<}(I)$ have the same Hilbert function, in particular, they have the same $h$-polynomial.
\end{Lemma}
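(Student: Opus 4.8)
The plan is to prove the statement via Macaulay's basis theorem, comparing the two quotient rings degree by degree. Since $I$ is a graded ideal and $<$ is a monomial order, the initial ideal ${\rm in}_<(I)$ is a monomial ideal that is again graded, because the initial term of a homogeneous element lies in the same degree. The core of the argument is to exhibit a common $K$-vector-space basis for $S/I$ and $S/{\rm in}_<(I)$ in each degree, namely the set of \emph{standard monomials}---those monomials of $S$ that do not belong to ${\rm in}_<(I)$.

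First I would observe that for the monomial ideal ${\rm in}_<(I)$ the residues of the standard monomials manifestly form a $K$-basis of $S/{\rm in}_<(I)$, since a monomial ideal is spanned as a $K$-vector space by the monomials it contains. The substance of the lemma is therefore the corresponding statement for $S/I$: I would show that the residues of the standard monomials also form a $K$-basis of $S/I$. Spanning follows from the division algorithm with respect to a Gr\"{o}bner basis $\Gc$ of $I$: repeatedly subtracting off multiples of elements of $\Gc$ to cancel leading terms, any $f \in S$ reduces modulo $I$ to a $K$-linear combination of standard monomials, the process terminating because $<$ is a well-order. Linear independence is the key point: if some nontrivial $K$-linear combination of standard monomials lay in $I$, its initial monomial would be a standard monomial belonging to ${\rm in}_<(I)$, contradicting the definition of standard monomial.

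Having established the common basis, I would pass to Hilbert functions. Because $I$ is graded and the initial term of a homogeneous polynomial has the same degree, the standard monomials of a fixed degree $m$ give simultaneously a $K$-basis of $(S/I)_m$ and of $(S/{\rm in}_<(I))_m$; hence $\dim_K (S/I)_m = \dim_K (S/{\rm in}_<(I))_m$ for every $m$, i.e.\ the two rings share the same Hilbert function. Equal Hilbert functions yield equal Hilbert series, and since the $h$-polynomial is the numerator obtained by writing the Hilbert series over the appropriate power of $(1-t)$, the two rings have the same $h$-polynomial.

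The main obstacle is the linear independence half of Macaulay's theorem, which is precisely where the defining property of ${\rm in}_<(I)$ enters: one must rule out any hidden relation among standard monomials modulo $I$, and this hinges on the fact that the initial monomial of any nonzero element of $I$ lies in ${\rm in}_<(I)$ by construction. Everything else is bookkeeping with the grading, which is harmless because $I$ is homogeneous and a monomial order is compatible with taking leading terms of homogeneous elements.
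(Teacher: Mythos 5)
Your proof is correct and is essentially the same argument as the source: the paper does not prove this lemma itself but cites it (Corollary 6.1.5 of Herzog--Hibi's monograph on monomial ideals), and the proof there is precisely the Macaulay basis theorem route you take, with the standard monomials outside ${\rm in}_{<}(I)$ serving as a common degreewise $K$-basis of $S/I$ and $S/{\rm in}_{<}(I)$. Nothing further is needed.
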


 Since a lattice polytope $\Pc$ is unimodularly equivalent to 
a lattice polytope $\Pc'$ if and only if $\Pc * (-\Pc)$
 is unimodularly equivalent to  $\Pc' * (-\Pc')$
and since any initial ideal given in  Section~\ref{gbsection} is squarefree,
we can use the following lemma.

\begin{Lemma}[{\cite[Theorem 2.2]{HOTCayley}}]
\label{initialcomplex}
Let $\Pc_1, \dots, \Pc_r \subset \RR^d$ be spanning lattice polytopes
of dimension $d$.
Suppose that the toric ideal of $\Pc_1 * \cdots * \Pc_r$ has 
a squarefree initial ideal.
Then the toric ideal of $\Pc_1 + \cdots + \Pc_r$ has 
a squarefree initial ideal, and both 
$\Pc_1 * \cdots * \Pc_r$ and $\Pc_1 + \cdots + \Pc_r$
have a regular unimodular triangulation and are IDP.
\end{Lemma}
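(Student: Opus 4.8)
The plan is to reduce everything to the language of regular triangulations via Sturmfels' dictionary and then to transport a triangulation across the Cayley trick. Throughout, write $\Qc = \Pc_1 * \cdots * \Pc_r$ for the Cayley sum and $\Sc = \Pc_1 + \cdots + \Pc_r$ for the Minkowski sum. Since each $\Pc_i$ is spanning, one checks that $\Qc$ is spanning as well, and hence --- as recalled in the Introduction (\cite[Section~8]{sturmfels1996}) --- a regular triangulation of $\Qc$ (resp.\ $\Sc$) is unimodular precisely when the corresponding initial ideal of $I_{\Qc}$ (resp.\ $I_{\Sc}$) is squarefree. Thus the hypothesis that $I_{\Qc}$ has a squarefree initial ideal is equivalent to $\Qc$ admitting a regular unimodular triangulation; this immediately yields the assertions for $\Qc$ itself, because a lattice polytope with a unimodular triangulation is IDP. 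It therefore remains only to produce a regular unimodular triangulation of $\Sc$, since the squarefree initial ideal and the IDP property of $\Sc$ will follow by the same dictionary.

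The heart of the argument is the \emph{Cayley trick}. It furnishes an inclusion-preserving bijection between the regular polyhedral subdivisions of the Cayley polytope $\Qc$ and the regular mixed subdivisions of the Minkowski sum $\Sc$, under which triangulations of $\Qc$ correspond to fine mixed subdivisions of $\Sc$. I would feed our regular unimodular triangulation $\Delta$ of $\Qc$ into this correspondence to obtain a regular fine mixed subdivision $\Gamma$ of $\Sc$. Each cell of $\Gamma$ has the form $B_1 + \cdots + B_r$, where $B_i$ is a face of $\Pc_i$ carried by a simplex of $\Delta$ and $\dim(B_1 + \cdots + B_r) = \sum_i \dim B_i$. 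Unimodularity of $\Delta$ then forces each $B_i$ to be a unimodular simplex and the summands to meet the lattice in direct sum, so that the cell $B_1 + \cdots + B_r$ is lattice-equivalent to a product of standard simplices $\Delta^{d_1} \times \cdots \times \Delta^{d_r}$ with $\sum_i d_i = d$. Carrying out this normalized-volume bookkeeping is routine.

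The main obstacle is that $\Gamma$ is in general \emph{not} a triangulation: a product of simplices, such as a unimodular parallelepiped, is not a simplex, so the cells of $\Gamma$ must be subdivided further. The remedy is that every product of standard simplices carries its standard (staircase) triangulation, which is regular and unimodular. The delicate point --- and the step I expect to require the most care --- is to refine all cells of $\Gamma$ at once so that the local triangulations agree on shared faces and the global refinement remains regular; this can be arranged by choosing a single generic lattice-compatible lifting that induces the staircase triangulation on every cell simultaneously. The outcome is a regular unimodular triangulation of $\Sc$.

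Finally, I would translate back through Sturmfels' dictionary: the regular unimodular triangulation of $\Sc$ just constructed corresponds to a squarefree initial ideal of $I_{\Sc}$, and, being unimodularly triangulated, $\Sc$ is IDP; the analogous statements for $\Qc$ were already noted. This establishes all the conclusions. An alternative, purely algebraic route would identify the toric ring of $\Sc$ with the diagonal subalgebra $\bigoplus_m [K[\Qc]]_{(m,\dots,m)}$ of the multigraded ring $K[\Qc]$ and transport the squarefree Gr\"obner basis directly; however, that route must simultaneously establish the lattice-point identity $\sum_i (\Pc_i \cap \ZZ^d) = \Sc \cap \ZZ^d$ in order to match the two generating sets, and I expect its difficulty to concentrate in exactly that identification.
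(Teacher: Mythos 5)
A preliminary remark: this lemma is never proved in the present paper at all --- it is imported verbatim from \cite[Theorem 2.2]{HOTCayley} --- so there is no in-paper proof to compare against, and your argument has to be judged on its own terms. Your skeleton is the natural geometric one: use the spanning hypothesis to invoke Sturmfels' dictionary (squarefree initial ideal $\Leftrightarrow$ regular unimodular triangulation) for $\Qc=\Pc_1*\cdots*\Pc_r$, transport the resulting triangulation $\Delta$ through the Cayley trick to a regular fine mixed subdivision $\Gamma$ of $\Sc=\Pc_1+\cdots+\Pc_r$, check that unimodularity of the simplices of $\Delta$ forces every cell of $\Gamma$ to be lattice-equivalent to a product of standard simplices (your determinant bookkeeping here is indeed routine and correct), and then refine $\Gamma$ to a unimodular triangulation. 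One point you gloss over: to run the dictionary backwards for $\Sc$ you need the triangulation to use \emph{all} of $\Sc\cap\ZZ^d$ as vertices, while your construction a priori only uses points of the form $\ab_1+\cdots+\ab_r$ with $\ab_i\in\Pc_i\cap\ZZ^d$. This is rescued by unimodularity itself (a unimodular simplex has no lattice points other than its vertices, so every point of $\Sc\cap\ZZ^d$ must occur as a vertex), and this observation is also what yields the lattice-point identity; it belongs in your geometric route, not only in the ``alternative algebraic route'' where you relegate it.

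The genuine gap is the refinement step, and the mechanism you propose for it would fail. A \emph{generic} lifting never induces the staircase triangulation (staircase triangulations arise from very special, highly non-generic height functions), and, worse, a generic regular refinement of $\Gamma$ need not be unimodular at all: already the single product cell $\Delta^1+\Delta^1+\Delta^1$, a unit cube, admits the regular triangulation into five simplices consisting of four corner simplices together with the central tetrahedron $\conv\{(0,0,0),(1,1,0),(1,0,1),(0,1,1)\}$, which has normalized volume $2$. So such cells sitting inside $\Gamma$ can be regularly refined in a non-unimodular way, and ``genericity'' is exactly the wrong hypothesis; conversely, making cell-by-cell staircase choices patch into one globally \emph{regular} triangulation is precisely what you assume rather than prove. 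The standard repair is to use pulling (reverse-lexicographic) refinements instead: fix one global ordering of the lattice points and pull; the pulling refinement of a regular subdivision is again regular and is defined face-by-face, hence automatically consistent on shared faces, and since products of standard simplices are $2$-level, hence compressed, every pulling triangulation of every cell of $\Gamma$ is unimodular. With that substitution your argument closes; without it, the proof is incomplete at its central step.
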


Now, we prove Theorems \ref{main1} and \ref{main2}.

\begin{proof}[Proof of Theorem~\ref{main2}]
	Let $A=(\ab_1,\ldots,\ab_n) \in \ZZ^{d \times n}$ be a unimodular configuration and set $A_0=(A, {\bf 0}) \in \ZZ^{d \times (n+1)}$.
	Assume that $\Pc_{A_0} \cap \ZZ^d=\{\ab_1,\ldots,\ab_n,{\bf 0}\}$ and $\Pc_{A_0}$ is spanning. 
From Theorem~\ref{AzeroGB}, the toric ideal $I_{\Pc_{A_0}*(-\Pc_{A_0})}$
has a squarefree initial ideal.
Hence by Lemma~\ref{initialcomplex},
the toric ideal of $\Pc_{A_0} + (-\Pc_{A_0})$ has 
a squarefree initial ideal, and both 
$\Pc_{A_0}*(-\Pc_{A_0})$ and $\Pc_{A_0} + (-\Pc_{A_0})$
have a regular unimodular triangulation and are IDP.
Then \cite[Theorem~0.4]{TCayley} guarantees that $(\Pc_{A_0} \cap \ZZ^d)+(-\Pc_{A_0} \cap \ZZ^d)=(\Pc_{A_0}+(-\Pc_{A_0})) \cap \ZZ^d$.

On the other hand, the minimal set of
monomial generators of the initial ideals ${\rm in}_<(I_{A^\pm})$
and ${\rm in}_{<'} (I_{\Pc_{A_0}*(-\Pc_{A_0})})$ given in 
Theorems~\ref{pmGB} and \ref{AzeroGB} are the same.
Thus 
\begin{eqnarray*}
 &  & K[x_1,\dots,x_n,y_1,\dots,y_n,z]/{\rm in}_<(I_{A^\pm})\\
&\simeq&
K[x_1,\dots,x_n,y_1,\dots,y_n]/\left<x_1 y_1,\dots,x_ny_n,{\rm in}_<(g_1), \dots, {\rm in}_<(g_s)\right>[z]
\end{eqnarray*}
and
\begin{eqnarray*}
 &  & K[x_0, x_1,\dots,x_n,y_0,y_1,\dots,y_n]/{\rm in}_{<'} (I_{\Pc_{A_0}*(-\Pc_{A_0})})\\
&\simeq&
K[x_1,\dots,x_n,y_1,\dots,y_n]/\left<x_1 y_1,\dots,x_ny_n,{\rm in}_<(g_1), \dots, {\rm in}_<(g_s)\right>[x_0, y_0].
\end{eqnarray*}
Since $\Pc_{A_0}*(-\Pc_{A_0})$ is IDP, it follows from Lemma \ref{lem:initialh} that
\begin{eqnarray}
h(K[A^\pm], t) &=& h(K[x_1,\dots,x_n,y_1,\dots,y_n,z]/{\rm in}_<(I_{A^\pm}), t) \label{aaa} \\
&=& h( K[x_0, x_1,\dots,x_n,y_0,y_1,\dots,y_n]/{\rm in}_{<'} (I_{\Pc_{A_0}*(-\Pc_{A_0})}), t) \label{bbb}\\
&=& h(K[\Pc_{A_0}*(-\Pc_{A_0})], t) \label{ccc}\\
&=& h^*(\Pc_{A_0}*(-\Pc_{A_0}), t). \label{ddd}
\end{eqnarray}
Here (\ref{aaa}) and (\ref{ccc}) follow from Lemma \ref{lem:initialh},
(\ref{bbb}) follows from the above,
and (\ref{ddd}) holds since $\Pc_{A_0}*(-\Pc_{A_0})$ is IDP.
From Proposition~\ref{OHresults}, $h(K[A^\pm], t)$ is palindromic and of degree $d$,
and so is $h^*(\Pc_{A_0}*(-\Pc_{A_0}), t) $.
Since the dimension of  $\Pc_{A_0}*(-\Pc_{A_0})$ is $d+1$,
 $\Pc_{A_0}*(-\Pc_{A_0})$ is Gorenstein of index $2$.
By \cite[Theorem~2.6]{BN08}, $\Pc_{A_0}+ (-\Pc_{A_0})$ is reflexive.
\end{proof}

\begin{proof}[Proof of Theorem~\ref{main1}]
The proof is the same as the proof of Theorem~\ref{main2} except for 
the following discussion.
Comparing the initial ideals ${\rm in}_<(I_{A^\pm})$
and ${\rm in}_{<'} (I_{\Pc_A*(-\Pc_A)})$ given in 
Theorems~\ref{pmGB} and \ref{CayleyGB}, it follows that 
\begin{eqnarray*}
 & &K[x_1,\dots,x_n,y_1,\dots,y_n,z]/{\rm in}_<(I_{A^\pm})\\
&\simeq&
(K[x_1,y_1]/\left<x_1 y_1\right>)\\
& & \ \ \ \   \otimes_K
( K[x_2,\dots,x_n,y_2,\dots,y_n]/\left<x_2y_2,\dots,x_ny_n,{\rm in}_<(g_1), \dots, {\rm in}_<(g_s)\right>)[z]
\end{eqnarray*}
and
\begin{eqnarray*}
 & & K[x_1,\dots,x_n,y_1,\dots,y_n]/{\rm in}_{<'} (I_{\Pc_A*(-\Pc_A)})\\
&\simeq&
( K[x_2,\dots,x_n,y_2,\dots,y_n]/\left<x_2y_2,\dots,x_ny_n,{\rm in}_<(g_1), \dots, {\rm in}_<(g_s)\right>)[x_1,y_1].
\end{eqnarray*}
Then $h(K[x_1,y_1]/\left<x_1 y_1\right>, t) = t+1$, and hence
\begin{eqnarray*}
 & & h(K[x_1,\dots,x_n,y_1,\dots,y_n,z]/{\rm in}_<(I_{A^\pm}), t)\\
&=& (t+1 ) h(K[x_1,\dots,x_n,y_1,\dots,y_n]/{\rm in}_{<'} (I_{\Pc_A*(-\Pc_A)}),t)
.
\end{eqnarray*}
Since $\Pc_A*(-\Pc_A)$ is IDP,  it follows from Lemma \ref{lem:initialh} that
\begin{eqnarray*}
h(K[A^\pm], t) &=& h(K[x_1,\dots,x_n,y_1,\dots,y_n,z]/{\rm in}_<(I_{A^\pm}), t)\\
&=& (t+1 ) h(K[x_1,\dots,x_n,y_1,\dots,y_n]/{\rm in}_{<'} (I_{\Pc_A*(-\Pc_A)}),t)\\
&=& (t+1) h (K[\Pc_A*(-\Pc_A)], t)\\
&=& (t+1) h^*(\Pc_A*(-\Pc_A), t).
\end{eqnarray*}
From Proposition~\ref{OHresults}, $h(K[A^\pm], t) $ is palindromic and of degree $d$,
and hence $h^*(\Pc_A*(-\Pc_A), t)$ is palindromic and of degree $d-1$.
Since the dimension of  $\Pc_A*(-\Pc_A)$ is $d$,
 $\Pc_A*(-\Pc_A)$ is Gorenstein of index $2$.
\end{proof}

From the proof of Theorems \ref{main1} and \ref{main2} we obtain the following Corollary.
\begin{Corollary}
Let $A=(\ab_1,\ldots,\ab_n) \in \ZZ^{d \times n}$ be a unimodular configuration and set $A_0=(A, {\bf 0}) \in \ZZ^{d \times (n+1)}$.
	Assume that $\Pc_{A_0} \cap \ZZ^d=\{\ab_1,\ldots,\ab_n,{\bf 0}\}$, and both $\Pc_A$ and $\Pc_{A_0}$ are spanning. 
	Then one has
	\[
	h^*(\Pc_{A_0}*(-\Pc_{A_0}),t)=(1+t)h^*(\Pc_{A}*(-\Pc_{A}),t).
	\]

\end{Corollary}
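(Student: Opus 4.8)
The plan is to observe that both preceding theorem proofs funnel through one and the same intermediate quantity — the $h$-polynomial $h(K[A^\pm],t)$ of the toric ring of the centrally symmetric configuration — and then to eliminate it by equating the two expressions. Since both $\Pc_A$ and $\Pc_{A_0}$ are assumed spanning, every hypothesis needed to run both arguments is in force. (Note also that $\Pc_A \cap \ZZ^d = \{\ab_1,\ldots,\ab_n\}$ holds automatically: as $A$ is a configuration, $\Pc_A$ lies on a hyperplane missing ${\bf 0}$, so $\Pc_A \cap \ZZ^d \subseteq \Pc_{A_0} \cap \ZZ^d = \{\ab_1,\ldots,\ab_n,{\bf 0}\}$ cannot contain ${\bf 0}$; thus the hypotheses of Theorem~\ref{main1} are met.)

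First I would invoke the computation carried out in the proof of Theorem~\ref{main2}. Because $\Pc_{A_0}$ is spanning, Lemma~\ref{initialcomplex} applies to $\Pc_{A_0}*(-\Pc_{A_0})$, rendering this Cayley sum IDP, and the chain of equalities (\ref{aaa})--(\ref{ddd}) yields
\[
h^*(\Pc_{A_0}*(-\Pc_{A_0}),t) = h(K[A^\pm],t).
\]
Next I would invoke the parallel computation in the proof of Theorem~\ref{main1}. Because $\Pc_A$ is spanning, the same lemma makes $\Pc_A*(-\Pc_A)$ IDP, and the factorization displayed there — obtained by splitting off the tensor factor $K[x_1,y_1]/\langle x_1 y_1\rangle$, whose $h$-polynomial is $1+t$ — gives
\[
h(K[A^\pm],t) = (1+t)\,h^*(\Pc_A*(-\Pc_A),t).
\]
Combining the two identities produces $h^*(\Pc_{A_0}*(-\Pc_{A_0}),t) = (1+t)\,h^*(\Pc_A*(-\Pc_A),t)$, as claimed.

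There is essentially no genuine obstacle here; the corollary is a bookkeeping synthesis of the two theorem proofs. The only point requiring a moment's care is that the common link really is the \emph{same} object $h(K[A^\pm],t)$ in both arguments: Theorem~\ref{AzeroGB} reuses verbatim the reduced Gr\"obner basis of $I_{A^\pm}$ from Theorem~\ref{pmGB} built from the original configuration $A$ (adjoining the column ${\bf 0}$ to form $A_0$ does not alter $A^\pm$, which already carries a zero column), so the identical palindromicity and degree input of Proposition~\ref{OHresults} feeds both chains. Once this identification is made explicit, equating the two formulas for $h(K[A^\pm],t)$ completes the proof.
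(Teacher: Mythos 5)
Your proposal is correct and is essentially the paper's own argument: the paper derives the corollary precisely by eliminating the common quantity $h(K[A^\pm],t)$ between the equality $h^*(\Pc_{A_0}*(-\Pc_{A_0}),t)=h(K[A^\pm],t)$ from the proof of Theorem~\ref{main2} and the equality $h(K[A^\pm],t)=(1+t)\,h^*(\Pc_A*(-\Pc_A),t)$ from the proof of Theorem~\ref{main1}. Your additional check that $\Pc_A\cap\ZZ^d=\{\ab_1,\ldots,\ab_n\}$ follows automatically (since $\Pc_A$ lies on a hyperplane avoiding ${\bf 0}$) is a correct and welcome verification that the hypotheses of Theorem~\ref{main1} are indeed in force.
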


\section{Unimodular configurations arising from finite simple graphs}

\label{epsection}

Let $G$ be a finite connected simple graph on the vertex set $[d]$
with the edge set $E(G)=\{e_1,\dots,e_n\}$.
Here a graph is called {\em simple} if $G$ has no loops and
no multiple edges.
Given an edge $e = \{i,j\}$ of $G$,
let $\rho(e) = \eb_i + \eb_j \in \RR^d$.
The {\em edge polytope} of $G$ is $\Pc_{A_G}$,
where $A_G$ is the configuration defined by
\[
A_G=( \rho(e_1),\dots,\rho(e_n) )\in \ZZ^{d \times n}.
\]
Since $\Pc_{A_G}$ is a $(0,1)$-polytope, we have
$\Pc_{A_G} \cap \ZZ^d = \{\rho(e_1),\dots,\rho(e_n)\} $.
It is known by \cite[Proposition~1.3]{OH:normal} that
\[
\dim \Pc_{A_G} = 
\left\{
\begin{array}{cc}
d-2 & \mbox{ if } G \mbox{ is bipartite,}\\
d-1 & \mbox{ otherwise.}
\end{array}
\right.
\]
A classification of the graphs $G$ such that $A_G$ is unimodular is as follows:

\begin{Proposition}[{\cite[Theorem~3.3]{OHcentrally}}]
Let $G$ be a finite connected simple graph.
Then the following conditions are equivalent{\rm :}
\begin{itemize}
\item[{\rm (i)}]
The toric ring $K[A_G^\pm]$ is normal{\rm ;}
\item[{\rm (ii)}]
The toric ideal $I_{A_G^\pm}$ has a squarefree initial ideal{\rm ;}
\item[{\rm (iii)}]
The configuration $A_G$ is unimodular (by deleting a redundant row if $G$ is bipartite){\rm ;}
\item[{\rm (iv)}]
All pairs of odd cycles in $G$ have a common vertex.
\end{itemize}  
\end{Proposition}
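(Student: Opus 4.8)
The plan is to prove the four conditions equivalent through the cycle $\text{(iv)} \Rightarrow \text{(iii)} \Rightarrow \text{(ii)} \Rightarrow \text{(i)} \Rightarrow \text{(iv)}$, so that the combinatorial odd-cycle condition governs the algebra. The geometric heart is the equivalence $\text{(iii)} \Leftrightarrow \text{(iv)}$, which I would obtain from the classical description of the maximal minors of the incidence configuration $A_G$. A maximal minor of $A_G$ is nonzero precisely when the corresponding set of edges spans a subgraph each of whose connected components contains a unique cycle and that cycle is odd; in that case the minor has absolute value $2^c$, where $c$ is the number of components. (If $G$ is bipartite, deleting a redundant row reduces $A_G$ to the oriented incidence matrix of a bipartite graph, which is totally unimodular, so (iii) holds while (iv) is vacuous.) Since $G$ is connected and non-bipartite, a spanning tree together with one edge closing an odd cycle produces a minor of absolute value $2$; hence unimodularity is equivalent to the nonexistence of a basis with $c \geq 2$. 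Such a basis exists if and only if $G$ contains two vertex-disjoint odd cycles: given two such cycles, place them in distinct components and grow a spanning subgraph by repeatedly attaching pendant edges to uncovered vertices, which never merges the two components. This gives $\text{(iii)} \Leftrightarrow \text{(iv)}$.

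The two remaining forward implications are short. The implication $\text{(iii)} \Rightarrow \text{(ii)}$ is immediate from Proposition~\ref{OHresults}(a), which already provides a squarefree initial ideal of $I_{A^\pm}$ whenever $A$ is unimodular. The implication $\text{(ii)} \Rightarrow \text{(i)}$ is the standard fact that a toric ideal possessing a squarefree, hence radical, initial ideal has a normal toric ring (\cite{sturmfels1996}).

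The main obstacle is the closing implication $\text{(i)} \Rightarrow \text{(iv)}$, which I would prove in contrapositive form: if $G$ contains two vertex-disjoint odd cycles $C_1, C_2$, with vertex sets $V_1, V_2$ of sizes $2p+1$ and $2q+1$, then $K[A_G^\pm]$ is not normal. The strategy is to exhibit an explicit element of the normalization lying outside the semigroup. Summing the generators $(\rho(e),1)$ over the edges of $C_1$ and the generators $(-\rho(e),1)$ over the edges of $C_2$ shows that $2\beta \in \ZZ_{\geq 0} A_G^\pm$, where $\beta = \bigl(\sum_{v \in V_1} \eb_v - \sum_{v \in V_2} \eb_v,\, p+q+1\bigr)$; moreover the $\tb$-part has even coordinate sum $|V_1|-|V_2|$, so $\beta$ lies in the lattice $\ZZ A_G^\pm$, using that for a connected non-bipartite graph the vectors $\rho(e)$ generate exactly the even sublattice $\{x \in \ZZ^d : \sum_i x_i \text{ even}\}$. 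Thus $\beta$ belongs to the saturation of the semigroup.

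The crux is to show $\beta \notin \ZZ_{\geq 0} A_G^\pm$. Any such representation would yield integer edge weights $n_e$ with $\sum_{e \ni v} n_e$ equal to $+1$ on $V_1$, to $-1$ on $V_2$, and to $0$ elsewhere, together with the budget constraint $\sum_e |n_e| \leq p+q+1$. A handshake/parity argument shows that the edges carrying odd $n_e$ must have odd-degree vertex set exactly $V_1 \cup V_2$, a set of $2(p+q+1)$ vertices, forcing $\sum_e |n_e| \geq p+q+1$; equality can hold only if these edges form a perfect matching of $V_1 \cup V_2$ with all $|n_e| = 1$. Since $|V_1|$ is odd, such a matching must use at least one cross edge between $V_1$ and $V_2$, and a single edge with $|n_e|=1$ cannot deliver $+1$ at one endpoint and $-1$ at the other; hence equality is impossible and $\sum_e |n_e| \geq p+q+2$, contradicting the budget. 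I expect this matching-parity step to be the delicate part, as it is precisely where vertex-disjointness is used, rather than the weaker "shared vertex or connecting edge" condition governing the ordinary edge ring. Establishing it completes $\text{(i)} \Rightarrow \text{(iv)}$ and closes the cycle.
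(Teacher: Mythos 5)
The paper offers no proof of this Proposition at all: it is quoted verbatim from \cite[Theorem~3.3]{OHcentrally}, the same source as Proposition~\ref{OHresults}, so there is no internal argument to compare yours against. Judged on its own merits, your proof is correct and essentially self-contained, and it reconstructs the type of argument one expects in the cited source: general unimodular machinery for (iii)$\Rightarrow$(ii)$\Rightarrow$(i), plus graph-specific work for the rest. In detail: the equivalence (iii)$\Leftrightarrow$(iv) via the classical minor formula (a nonzero maximal minor has absolute value $2^c$, where $c$ is the number of components of the corresponding spanning subgraph, each unicyclic with odd cycle) is right, and your pendant-growth construction correctly turns two vertex-disjoint odd cycles into a basis with $c\ge 2$, hence a minor of absolute value $4$ alongside the value-$2$ minor from a spanning tree plus one odd-cycle edge; (iii)$\Rightarrow$(ii) is indeed immediate from Proposition~\ref{OHresults}(a); (ii)$\Rightarrow$(i) is Sturmfels' standard implication; and your non-normality witness $\beta=\bigl(\sum_{v\in V_1}\eb_v-\sum_{v\in V_2}\eb_v,\,p+q+1\bigr)$ works — I checked the parity argument (the edges with $n_e$ odd have odd-degree set exactly $V_1\cup V_2$, forcing $\sum_e|n_e|\ge p+q+1$), the equality analysis (all $|n_e|=1$ on a perfect matching of $V_1\cup V_2$, $n_e=0$ elsewhere), and the cross-edge contradiction (a cross edge $\{u,w\}$ would have to contribute $+1$ at $u\in V_1$ and $-1$ at $w\in V_2$ with the same coefficient $n_e$), and all three steps hold. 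Two points deserve an explicit sentence in a polished write-up: first, in the bipartite case Proposition~\ref{OHresults}(a) applies to the row-reduced matrix, so one needs the easy remark that deleting a redundant row changes neither the rational kernel of the defining matrix nor the toric ideal $I_{A_G^\pm}$, so condition (ii) is unaffected; second, the two facts you invoke without proof — the $2^c$ minor formula and the identification of $\ZZ\{\rho(e): e\in E(G)\}$ with the even-sum sublattice of $\ZZ^d$ for connected non-bipartite $G$ — are classical but load-bearing, so they should be cited or proved (both admit short inductive or walk-parity arguments).
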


It follows from {\cite{HMT:edge_linear}} that  $\Pc_{A_G}$ is always spanning.
\begin{Lemma}[{\cite[Proof of Corollary 3.4]{HMT:edge_linear}}]
Let $G$ be a finite connected graph.
Then $\Pc_{A_G}$ is spanning.
\end{Lemma}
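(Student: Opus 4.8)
The plan is to reduce the statement to a lattice-saturation computation. Since $\Pc_{A_G}$ is a $(0,1)$-polytope, its only lattice points are the vertices $\rho(e_1),\dots,\rho(e_n)$, so by definition $\Pc_{A_G}$ is spanning precisely when the sublattice
\[
\Lambda_G:=\sum_{i=1}^{n}\ZZ\,(\rho(e_i),1)\subseteq \ZZ^{d+1}
\]
coincides with its saturation $\ZZ^{d+1}\cap \RR\Lambda_G$. Every generator $(\rho(e_i),1)=\eb_i+\eb_j+\eb_{d+1}$ satisfies $\sum_{k=1}^{d}x_k=2x_{d+1}$, so the projection forgetting the last coordinate maps $\Lambda_G$ isomorphically onto the lattice $\ZZ A_G=\sum_i \ZZ\,\rho(e_i)\subseteq\ZZ^d$, with the last coordinate recovered as $x_{d+1}=\tfrac12\sum_k x_k$. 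Thus the first step is to describe $\ZZ A_G$ explicitly, and the second is to check that, once the constraint $x_{d+1}=\tfrac12\sum_k x_k$ is imposed, $\Lambda_G$ is saturated.

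For the first step I would use a spanning tree $T$ of the connected graph $G$ to recover the standard description of $\ZZ A_G$: when $G$ is bipartite with parts $V_1\sqcup V_2$ one has $\ZZ A_G=\{x\in\ZZ^d:\sum_{i\in V_1}x_i=\sum_{j\in V_2}x_j\}$, while when $G$ is non-bipartite one has $\ZZ A_G=\{x\in\ZZ^d:\sum_k x_k\equiv 0\bmod 2\}$. The inclusions $\subseteq$ are immediate since each $\rho(e_i)$ has coordinate sum $2$ (and exactly one nonzero entry in each part when $G$ is bipartite). For $\supseteq$, taking alternating sums of the $\rho(e)$ along tree paths produces the vectors $\eb_i\pm\eb_j$ needed to generate the asserted lattice; in the non-bipartite case, summing around an odd cycle additionally produces an element of the form $2\eb_v$, and these together suffice.

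It then remains to verify saturation, and here the two cases behave oppositely, which is the crux. If $G$ is bipartite then $\RR\Lambda_G$ projects onto the hyperplane $\{\sum_{i\in V_1}x_i=\sum_{j\in V_2}x_j\}$, whose integer points are already exactly $\ZZ A_G$, and the homogenizing coordinate $x_{d+1}=\sum_{i\in V_1}x_i$ is automatically an integer; hence $\Lambda_G$ is saturated. If $G$ is non-bipartite then $\ZZ A_G$ is only an index-$2$ sublattice of $\ZZ^d$, so one might fear that $\Lambda_G$ fails to be saturated. The key observation is that $\RR\Lambda_G$ now projects onto all of $\RR^d$, so every integer point of $\RR\Lambda_G$ has the form $(x,\tfrac12\sum_k x_k)$ with $x\in\ZZ^d$ and $\tfrac12\sum_k x_k\in\ZZ$; the latter says exactly that $\sum_k x_k$ is even, i.e.\ that $x\in\ZZ A_G$. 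Thus the parity defect of $\ZZ A_G$ matches precisely the integrality required of the homogenizing coordinate, and $\Lambda_G$ is again saturated.

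The main obstacle is therefore not a difficult computation but this matching phenomenon, and the plan is designed to make it transparent. One should resist computing the maximal minors of the $(d+1)\times n$ lift matrix directly: for a non-bipartite odd cycle these minors take the value $\pm 2$ on the vertex rows, and only by including the homogenizing row does one recover a unit. The saturation argument above explains conceptually why this happens, namely that the factor of two introduced by odd cycles is exactly absorbed by the height-one lift, so I expect the only genuinely delicate point to be the clean justification of the explicit description of $\ZZ A_G$ in the non-bipartite case.
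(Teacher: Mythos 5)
Your proof is correct, and since the paper never proves this lemma itself --- it is quoted from the proof of Corollary 3.4 of \cite{HMT:edge_linear} --- there is no internal argument to compare with; what you have written is a self-contained, elementary substitute for that citation. Your route (homogenize; use the relation $\sum_{k=1}^{d}x_k=2x_{d+1}$ to identify $\Lambda_G$ with $\ZZ A_G$; compute $\ZZ A_G$ by telescoping alternating sums along paths, together with $2\eb_v$ coming from an odd cycle; then check saturation) is sound, and your key observation is the right one: in the non-bipartite case $\ZZ A_G$ has index $2$ in $\ZZ^d$, and this defect is exactly absorbed by the integrality requirement on $x_{d+1}=\tfrac{1}{2}\sum_{k}x_k$. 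Two points should be made explicit in a final write-up. First, your opening ``by definition'' is not the paper's definition: the paper defines spanning for non-full-dimensional polytopes via unimodular equivalence to a full-dimensional spanning polytope, so you owe the (routine) equivalence of that with saturation of $\Lambda_G$ in $\ZZ^{d+1}$; writing $\Lambda_G=\ZZ(\ab_0,1)+D\times\{0\}$ and $\ZZ^{d+1}\cap\RR\Lambda_G=\ZZ(\ab_0,1)+L_0\times\{0\}$, where $\ab_0$ is any $\rho(e_i)$, $D$ is the lattice generated by the differences $\rho(e_i)-\ab_0$, and $L_0$ is the saturated direction lattice $\RR D\cap\ZZ^d$, both conditions reduce to $D=L_0$. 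Second, the generation statements for $\ZZ A_G$ (the sign in $\eb_u\pm\eb_v$ is governed by the parity of the connecting path; these vectors, plus $2\eb_v$ in the non-bipartite case, generate the even-sum lattice, respectively the bipartite balanced lattice) are exactly the delicate computation you flag, and they do go through by telescoping and connectivity. Neither point is a gap in substance; they are write-up details.
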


Hence we obtain the following.

\begin{Theorem}
\label{edge1}
Let $G$ be a finite connected simple graph on $[d]$. 
Assume that all pairs of odd cycles in $G$ have a common vertex.
	Then 
\begin{enumerate}
		\item[{\rm (1)}] $\Pc_{A_G}*(-\Pc_{A_G})$ is Gorenstein of index $2$ with a regular unimodular triangulation{\rm ;}
		\item[{\rm (2)}] $\Pc_{A_G}+(-\Pc_{A_G})$ is reflexive with a regular unimodular triangulation. In particular, $(\Pc_{A_G}-\ab)+(-\Pc_{A_G}+\ab)$ is a nef-partition, where $\ab$ is an arbitrary lattice point in $\Pc_{A_G}${\rm ;}
		\item[{\rm (3)}] One has $(\Pc_{A_G} \cap \ZZ^d)+(-\Pc_{A_G} \cap \ZZ^d)=(\Pc_{A_G}+(-\Pc_{A_G})) \cap \ZZ^d$.
	\end{enumerate}
\end{Theorem}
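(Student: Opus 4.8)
The plan is to deduce the assertion directly from Theorem~\ref{main1} by checking, for the edge configuration $A_G$, each of its three hypotheses: that $A_G$ is a unimodular configuration, that $\Pc_{A_G}\cap\ZZ^d=\{\rho(e_1),\dots,\rho(e_n)\}$, and that $\Pc_{A_G}$ is spanning. Once these are secured, statements (1), (2) and (3) of Theorem~\ref{edge1} are precisely statements (1), (2) and (3) of Theorem~\ref{main1}, and the nef-partition assertion in (2) follows from the Remark after Theorem~\ref{main2}: if $\Pc+(-\Pc)$ is reflexive then $(\Pc-\ab)+(-\Pc+\ab)$ is a nef-partition for any lattice point $\ab\in\Pc$.

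First I would record that $A_G$ is a configuration, since every column $\rho(e)=\eb_i+\eb_j$ satisfies $\langle\rho(e),\tfrac12\mathbf 1\rangle=1$, so all columns lie on an affine hyperplane missing the origin. For unimodularity I would invoke \cite[Theorem~3.3]{OHcentrally}: the hypothesis that all pairs of odd cycles of $G$ have a common vertex is condition (iv) there, equivalent to condition (iii), namely that $A_G$ is unimodular (after deleting a redundant row if $G$ is bipartite). The remaining two hypotheses are immediate from the excerpt: $\Pc_{A_G}\cap\ZZ^d=\{\rho(e_1),\dots,\rho(e_n)\}$ because $\Pc_{A_G}$ is a $(0,1)$-polytope, and $\Pc_{A_G}$ is spanning by the Lemma of \cite[Proof of Corollary 3.4]{HMT:edge_linear} (here $G$ is connected).

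The main obstacle is the bipartite case, where $A_G$ has rank $d-1$ rather than $d$ and so is not literally a unimodular configuration in the sense of the definition, which demands full rank $d$. Here I would pass to the full-rank reduction $A_G'\in\ZZ^{(d-1)\times n}$ obtained by deleting a redundant row; by \cite[Theorem~3.3]{OHcentrally} this $A_G'$ is a genuine unimodular configuration of rank $d-1$, and the deletion is an integral affine projection inducing a unimodular equivalence between $\Pc_{A_G}$ and $\Pc_{A_G'}$ (for a bipartition $V=V_1\sqcup V_2$ every edge vector satisfies $\sum_{k\in V_1}x_k=1$, so the deleted coordinate is recovered integrally from the others). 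In the non-bipartite case $A_G$ already has rank $d$ and no reduction is needed. In either case, write $A'$ for the resulting full-rank unimodular configuration; since the property that the lattice points of $\Pc_{A'}$ are exactly its columns, spanningness, the Gorenstein index, reflexivity, the existence of a regular unimodular triangulation, the integer decomposition property, and the nef-partition conclusion are all invariant under unimodular equivalence, it suffices to apply Theorem~\ref{main1} to $A'$ and transport the conclusions back to $\Pc_{A_G}$. This transfer is the only point requiring care; everything else is a direct citation.
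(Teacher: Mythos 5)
Your proposal is correct and follows essentially the same route as the paper: Theorem~\ref{edge1} is obtained there by applying Theorem~\ref{main1} directly, after citing \cite[Theorem~3.3]{OHcentrally} for unimodularity of $A_G$, the $(0,1)$-polytope observation for $\Pc_{A_G}\cap\ZZ^d$, and the spanning Lemma from \cite{HMT:edge_linear}. Your explicit handling of the bipartite case (rank reduction and transport of all conclusions under unimodular equivalence) merely spells out a detail the paper leaves implicit in the parenthetical ``by deleting a redundant row if $G$ is bipartite.''
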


On the other hand, in general, $\Pc_{(A_G)_0}$ is not always spanning even if $A_G$ is unimodular.

\begin{Example}
Let $G$ be a cycle of length $3$, i.e., 
$E(G) = \{\{1,2\},\{ 2,3\},\{1,3\} \}$.
Then 
\[
A_G =
\left(
\begin{array}{ccc}
1 & 0 & 1\\
1 & 1 & 0\\
0 & 1 & 1
\end{array}
\right)
\]
is unimodular.
On the other hand, $\Pc_{(A_G)_0}$ is not spanning.
\end{Example}

We determine when $\Pc_{(A_G)_0}$ is spanning.

\begin{Lemma}
	Let $G$ be a finite connected simple graph. 
    Then $\Pc_{(A_G)_0}$ is spanning if and only if $G$ is bipartite.
\end{Lemma}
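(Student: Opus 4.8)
The plan is to analyze the lattice points of $\Pc_{(A_G)_0}=\conv(\{{\bf 0},\rho(e_1),\dots,\rho(e_n)\})$ together with the affine lattice of its affine hull, and to treat the bipartite and non-bipartite cases separately. First I record the dimension: since every $\rho(e_i)$ lies on the hyperplane $\{\xb\in\RR^d:\sum_k x_k=2\}$ while ${\bf 0}$ does not, the origin lies off ${\rm aff}(\Pc_{A_G})$, so $\dim\Pc_{(A_G)_0}=\dim\Pc_{A_G}+1$; by the dimension formula recalled above (\cite[Proposition~1.3]{OH:normal}) this is $d-1$ when $G$ is bipartite and $d$ when $G$ is non-bipartite. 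I will use the working criterion that a full-dimensional lattice polytope is spanning if and only if the differences of its lattice points generate the ambient lattice, and, more generally, that a lattice polytope is spanning if and only if its lattice points affinely generate the lattice ${\rm aff}(\Pc)\cap\ZZ^d$ of its affine hull (when this lattice is saturated, an appropriate unimodular change of coordinates reduces this to the full-dimensional case).

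For non-bipartite $G$ the polytope is full-dimensional, and I claim it is not spanning because every lattice point has even coordinate sum. A point of $\Pc_{(A_G)_0}$ is a convex combination $\lambda_0{\bf 0}+\sum_i\lambda_i\rho(e_i)$, whose coordinate sum equals $2\sum_{i\ge 1}\lambda_i\in[0,2]$, so a lattice point has coordinate sum $0$, $1$, or $2$. As all generators are nonnegative, a sum-$1$ lattice point would be some $\eb_k$; writing $\eb_k$ in this form forces $\lambda_i=0$ for every edge $e_i$ meeting a vertex $j\neq k$, and since $G$ is simple every edge has two endpoints and hence meets some $j\neq k$, so all $\lambda_i=0$ for $i\ge1$, contradicting $\sum_{i\ge1}\lambda_i=1/2$. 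Therefore every lattice point lies in the index-$2$ sublattice $\{\xb\in\ZZ^d:\sum_k x_k \text{ even}\}$, so their differences do too and cannot generate $\ZZ^d$; thus $\Pc_{(A_G)_0}$ is not spanning.

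For bipartite $G$ with parts $V_1,V_2$, both ${\bf 0}$ and every $\rho(e_i)$ satisfy $\sum_{i\in V_1}x_i=\sum_{j\in V_2}x_j$, so $\Pc_{(A_G)_0}$ lies in the hyperplane $H=\ker\psi$ for the functional $\psi(\xb)=\sum_{i\in V_1}x_i-\sum_{j\in V_2}x_j$. Since $\psi$ is surjective onto $\ZZ$, the affine lattice ${\rm aff}(\Pc_{(A_G)_0})\cap\ZZ^d=\ker\psi\cap\ZZ^d$ is saturated of rank $d-1$, matching $\dim\Pc_{(A_G)_0}$. Because ${\bf 0}$ is a lattice point, the group generated by the lattice points contains the edge lattice $\ZZ\{\rho(e_i)\}$ and is contained in $\ker\psi\cap\ZZ^d$, so it suffices to show that the vectors $\rho(e)$ for $e$ ranging over a spanning tree $T$ of $G$ already generate $\ker\psi\cap\ZZ^d$. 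I will prove this by induction on $d$ via leaf removal: choosing a leaf $v$ of $T$ with tree-edge $\{v,w\}$, for $\xb\in\ker\psi\cap\ZZ^d$ the vector $\xb-x_v\,\rho(\{v,w\})$ has vanishing $v$-coordinate and, as $\psi(\rho(\{v,w\}))=0$, still lies in $\ker\psi$; its restriction to $[d]\setminus\{v\}$ is then handled by the inductive hypothesis for the connected bipartite graph $G-v$ with spanning tree $T-v$, and adding back $x_v\rho(\{v,w\})$ recovers $\xb$. Hence the lattice points generate the saturated lattice $\ker\psi\cap\ZZ^d$, and $\Pc_{(A_G)_0}$ is spanning.

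The main obstacle is this bipartite lattice-generation step, where one must genuinely exploit connectivity (through the spanning tree) and bipartiteness (through $\psi$), in contrast to the elementary parity count in the non-bipartite direction. The leaf-removal induction is the cleanest route; the only points needing care are that $T-v$ remains a spanning tree of the connected graph $G-v$ and that the reduction preserves membership in $\ker\psi$, both of which follow at once from $\psi(\rho(e))=0$.
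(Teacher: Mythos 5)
Your proof is correct, and its two halves compare differently with the paper's. The non-bipartite direction is exactly the paper's argument (the parity obstruction: every lattice point of $\Pc_{(A_G)_0}$ has even coordinate sum, so the group generated by the lattice points lies in the index-two even sublattice of $\ZZ^d$); in fact you give slightly more than the paper, since you actually rule out coordinate-sum-one lattice points, a claim the paper merely asserts. The bipartite direction is where you genuinely diverge. The paper deletes a redundant row of $A_G$, realizes $\Pc_{(A_G)_0}$ as a full-dimensional polytope $\Pc_{A_0}\subset\RR^{d-1}$, and then cites Schrijver's result that every nonzero maximal minor of the row-deleted incidence matrix of a bipartite graph is $\pm 1$, from which the columns generate $\ZZ^{d-1}$. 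You instead work intrinsically inside the hyperplane $\ker\psi$ cut out by the bipartition functional, note that $\ker\psi\cap\ZZ^d$ is the saturated lattice of the affine hull (so spanning reduces to lattice generation there), and prove directly, by leaf-removal induction along a spanning tree $T$, that the vectors $\rho(e)$, $e\in T$, generate $\ker\psi\cap\ZZ^d$; the induction is sound since removing a leaf of $T$ leaves a spanning tree of the connected bipartite graph $G-v$, and the reduction $\xb\mapsto\xb-x_v\,\rho(\{v,w\})$ stays in $\ker\psi$. What each approach buys: yours is self-contained, re-proving the needed consequence of total unimodularity of bipartite incidence matrices from scratch and making explicit the lattice-theoretic bookkeeping (saturation, identification of the affine-hull lattice) that the paper's phrase ``unimodularly equivalent to $\Pc_{A_0}$'' quietly absorbs; the paper's route is shorter precisely because it outsources that step to a standard reference.
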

\begin{proof}
Note that a full-dimensional lattice polytope $\Pc \subset \RR^d$ with ${\bf 0} \in \Pc$ is spanning if and only if it holds $\sum_{\ab \in \Pc \cap \ZZ^d} \ZZ \ab = \ZZ^d$.

We assume that $G$ is not bipartite.
Then $\Pc_{(A_G)_0}$ is full-dimensional.
Since for any lattice point $(a_1,\ldots, a_d) \in \Pc_{(A_G)_0} \cap \ZZ^d$, $a_1+\cdots+a_d$ is even, it follows that for any lattice point $(x_1,\ldots, x_d) \in \sum_{\ab \in \Pc_{(A_G)_0} \cap \ZZ^d} \ZZ \ab$, $x_1+\cdots+x_d$ is even.
Hence $\Pc_{(A_G)_0}$ is not spanning.

Conversely, we assume that $G$ is bipartite. 
Let $A$ be a configuration obtained by deleting a row of $A_G$.
Then $\Pc_{(A_G)_0}$ is unimodularly equivalent to a full-dimensional lattice polytope $\Pc_{A_0} \subset \RR^{d-1}$.
It is known by \cite[Example 1 (p.273)]{integer} that any nonzero maximal minor of $A_0$ is $\pm 1$.
Thus $\sum_{\ab \in \Pc_{A_0} \cap \ZZ^{d-1}} \ZZ \ab = \ZZ^{d-1}$,
and hence $\Pc_{A_0} $ is spanning.
\end{proof}

Therefore, we have obtain the following.

\begin{Theorem} 
\label{edge2}
Let $G$ be a finite connected simple bipartite graph on $[d]$.
Then
		\begin{enumerate}
		\item[{\rm (1)}] $\Pc_{(A_G)_0}*(-\Pc_{(A_G)_0})$ is Gorenstein of index $2$ with a regular unimodular triangulation{\rm ;}
		\item[{\rm (2)}] $\Pc_{(A_G)_0}+(-\Pc_{(A_G)_0})$ is reflexive with a regular unimodular triangulation. In particular, $\Pc_{(A_G)_0}+(-\Pc_{(A_G)_0})$ is a nef-partition{\rm ;}
		\item[{\rm (3)}] One has $(\Pc_{(A_G)_0} \cap \ZZ^d)+(-\Pc_{(A_G)_0} \cap \ZZ^d)=(\Pc_{(A_G)_0}+(-\Pc_{(A_G)_0})) \cap \ZZ^d$;
		\item[{\rm (4)}] One has $h^*(\Pc_{(A_G)_0}*(-\Pc_{(A_G)_0}),t)=(1+t)h^*(\Pc_{A_G}*(-\Pc_{A_G}),t)$.
	\end{enumerate}
\end{Theorem}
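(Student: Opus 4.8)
The plan is to obtain Theorem~\ref{edge2} as a direct specialization of Theorem~\ref{main2} and of the Corollary to the edge configuration of a bipartite graph. The only genuine subtlety is that for a bipartite graph $A_G$ is rank-deficient---the relation $\sum_{i \in V_1} \mathrm{row}_i = \sum_{i \in V_2} \mathrm{row}_i$ holds, where $V_1 \sqcup V_2$ is the bipartition---so I must first replace $A_G$ by a full-rank configuration.

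First I would fix the bipartition $V_1 \sqcup V_2$ and let $A$ be the configuration obtained from $A_G$ by deleting a single row indexed by a vertex of $V_2$, as in the proof of the Lemma above characterizing when $\Pc_{(A_G)_0}$ is spanning. Deleting any one row causes no problem because the unique linear dependence among the rows of $A_G$ involves every row; moreover $A$ is still a configuration, since $\langle \ab_i, \mathbf{1}_{V_1}\rangle = 1$ for every edge. As in that Lemma's proof, this projection exhibits $\Pc_{A_G}$ and $\Pc_{(A_G)_0}$ as unimodularly equivalent to $\Pc_A$ and $\Pc_{A_0}$ respectively, where $A_0 = (A, \mathbf{0})$. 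Because unimodular equivalence is compatible with the Cayley- and Minkowski-sum constructions used here and preserves reflexivity, the Gorenstein property, regular unimodular triangulations, the integer decomposition property, and $h^*$-polynomials, it suffices to prove the four assertions with $A$ in place of $A_G$.

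Next I would verify that $A$ satisfies the hypotheses of Theorem~\ref{main2} and of the Corollary. Unimodularity of $A$ follows from the classification Proposition above: a bipartite graph has no odd cycle, so condition (iv) holds vacuously and hence (iii) does too. The equality $\Pc_{A_0} \cap \ZZ^{d-1} = \{\ab_1, \dots, \ab_n, \mathbf{0}\}$ transports from $\Pc_{(A_G)_0} \cap \ZZ^d = \{\rho(e_1), \dots, \rho(e_n), \mathbf{0}\}$, which I would check directly: writing a lattice point of $\Pc_{(A_G)_0}$ as $t\,p$ with $p \in \Pc_{A_G}$ and $t \in [0,1]$, its coordinate sum is $2t \in \{0,1,2\}$; the value $0$ gives $\mathbf{0}$, the value $2$ gives a vertex $\rho(e_i)$, and the value $1$ is impossible since it would force $2\eb_j \in \Pc_{A_G}$, contradicting that every coordinate on the edge polytope is at most $1$. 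Spanning of $\Pc_{A_0}$ is exactly the bipartite case of that same Lemma, and spanning of $\Pc_A$ follows from the Lemma of \cite{HMT:edge_linear} together with the equivalence $\Pc_A \cong \Pc_{A_G}$.

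With all hypotheses in hand, assertions (1), (2), (3) follow immediately by applying Theorem~\ref{main2} to $A$, and assertion (4) follows by applying the Corollary to $A$; transporting back along $\Pc_{(A_G)_0} \cong \Pc_{A_0}$ and $\Pc_{A_G} \cong \Pc_A$ yields the statements as phrased. I expect the only step requiring real care to be this passage to the full-rank configuration $A$, together with the checks that the lattice-point identity and the two spanning hypotheses survive the reduction; everything afterwards is citation.
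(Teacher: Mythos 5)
Your proposal is correct and takes essentially the same route as the paper, which obtains Theorem~\ref{edge2} by combining the classification Proposition (unimodularity of the bipartite incidence configuration after deleting a redundant row), the spanning lemma of \cite{HMT:edge_linear}, and the lemma that $\Pc_{(A_G)_0}$ is spanning exactly when $G$ is bipartite, and then invoking Theorem~\ref{main2} and the Corollary. Your write-up simply makes explicit the bookkeeping the paper leaves implicit: the passage to the full-rank configuration $A$, the lattice-point identity $\Pc_{(A_G)_0} \cap \ZZ^d=\{\rho(e_1),\ldots,\rho(e_n),{\bf 0}\}$, and the transport of all four assertions along the unimodular equivalences $\Pc_{A_G}\cong\Pc_A$ and $\Pc_{(A_G)_0}\cong\Pc_{A_0}$.
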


\begin{acknowledgement}
The authors are grateful to an anonymous referee for his careful reading of the manuscript and for his useful comments. 
The authors were partially supported by JSPS KAKENHI 18H01134, 19K14505 and 19J00312.
\end{acknowledgement}
\bibliographystyle{plain}
\bibliography{Bibliography.bib}

\end{document}